\documentclass[final,3p]{elsarticle}
\usepackage{lineno,hyperref}
\usepackage{amsmath}
\usepackage{amssymb}
\usepackage[linesnumbered,ruled,vlined]{algorithm2e}
\usepackage{caption}
\usepackage{mathtools}
\usepackage{changes}
\usepackage{multirow}
\usepackage{verbatim}
\usepackage{mathrsfs}
\usepackage{graphicx}
\usepackage{subcaption}
\usepackage{amsmath,amsthm,bm,mathrsfs}
\theoremstyle{plain}
\newtheorem{theorem}{Theorem}[section]

\newtheorem{proposition}[theorem]{Proposition}

\theoremstyle{definition}

\theoremstyle{remark}

\modulolinenumbers[5]

\journal{ArXiv.org}

\begin{document}

\begin{frontmatter}

\title{$LDL^\top$ Factorization-based Generalized Low-rank ADI Algorithm for Solving Large-scale Algebraic Riccati Equations}

\author[uz]{Umair~Zulfiqar\corref{mycorrespondingauthor}}
\cortext[mycorrespondingauthor]{Corresponding author}
\ead{umair@yangtzeu.edu.cn}
\address[uz]{School of Electronic Information and Electrical Engineering, Yangtze University, Jingzhou, Hubei, 434023, China}
\begin{abstract}
The low-rank alternating direction implicit (ADI) method is an efficient and effective solver for large-scale standard continuous-time algebraic Riccati equations that admit low-rank solutions. However, the existing low-rank ADI algorithm for Riccati equations (RADI) cannot be directly applied to general-form Riccati equations. This paper introduces a generalized RADI algorithm based on an $LDL^\top$ factorization, which efficiently handles the general Riccati equations arising in important applications like state estimation and controller design. An efficient implementation is presented that avoids the Sherman–Morrison–Woodbury formula and instead uses a low-rank Cholesky factor ADI method as the base algorithm to compute low-rank factors of general-form Riccati equations. Sample MATLAB-based implementations of the proposed algorithm are also provided. An approach for automatically and efficiently generating ADI shifts is discussed. Numerical examples solving several Riccati equations of orders ranging from $10^6$ to $10^7$ accurately and efficiently are presented, demonstrating the effectiveness of the proposed algorithm.
\end{abstract}

\begin{keyword}
ADI\sep Low-rank\sep Projection\sep Rational interpolation\sep Riccati equation
\end{keyword}

\end{frontmatter}

\section{Introduction}
The paper considers the following general form of the continuous-time algebraic Riccati equation (CARE). Let $X$ be a solution to the CARE
\begin{align}
A^\top X E+ E^\top X A+ E^\top XB_2R_2^{-1}B_2^\top XE -(E^\top X B_1+C_2^\top)R_1^{-1}(B_1^\top X E+C_2)+C_1^\top Z C_1=0,\label{gen_ricc}
\end{align}
where $E\in\mathbb{R}^{n\times n}$, $A\in\mathbb{R}^{n\times n}$, $B_1\in\mathbb{R}^{n\times m_1}$, $R_1=R_1^\top\in\mathbb{R}^{m_1\times m_1}$, $B_2\in\mathbb{R}^{n\times m_2}$, $R_2=R_2^\top\in\mathbb{R}^{m_2\times m_2}$, $C_1\in\mathbb{R}^{p\times n}$, $Z=Z^\top\in\mathbb{R}^{p\times p}$, and $C_2\in\mathbb{R}^{m_1\times n}$. The dimension $n$ is assumed to be large-scale, and the matrices $A$ and $E$ are sparse. The matrix $Z$ is symmetric indefinite. The CARE of the form \eqref{gen_ricc} is the same as the one implemented in MATLAB's \textit{`icare'} command, which can solve CARE of modest orders but cannot handle large-scale CAREs.

Define the matrices \(K_{\mathrm{gain}}\) and \(A_{\mathrm{cl}}\) as
\begin{align}
K_{\mathrm{gain}} &= R_1^{-1}\big(B_1^\top XE + C_2\big),\nonumber\\
A_{\mathrm{cl}} &= \Big(A + B_2 R_2^{-1} B_2^\top XE - B_1 K_{\mathrm{gain}}\Big)E^{-1}.\nonumber
\end{align}
The gain \(K_{\mathrm{gain}}\), obtained from the stabilizing solution of \eqref{gen_ricc}, guarantees that \(A_{\mathrm{cl}}\) is Hurwitz — meaning all its eigenvalues lie in the open left half of the complex plane. This paper focuses on the stabilizing solution of \eqref{gen_ricc}.

In the case where \(p \ll n\), \(m_1 \ll n\), and \(m_2 \ll n\), the solution \(X\) typically has low rank. This property allows for accurate low-rank approximations when \(n\) is large, as a direct computation of \(X\) is then computationally infeasible. The CARE of the form \eqref{gen_ricc} covers the CAREs considered in \cite{lin2015new,benner2018radi,bertram2024family,benner2023low,saak2024using,zulfiqar2025unified}.
\section{Main Work}
The low-rank stabilizing solution of the CARE \eqref{gen_ricc} can be obtained using the low-rank alternating direction implicit (ADI) method for CARE (RADI) \cite{benner2018radi} in the special case where \(B_2=0\), \(Z>0\), \(C_2=0\), and \(R_1>0\). However, RADI cannot handle the CARE \eqref{gen_ricc} in more general settings, such as when \(Z\) and \(R_1\) are indefinite, \(B_2\neq 0\), or \(C_2\neq 0\). In this section, we present a generalized low-rank algorithm for the CARE of the form \eqref{gen_ricc}. It is shown in \cite{zulfiqar2025unified} that ADI methods for Lyapunov, Sylvester, and CAREs are essentially Petrov-Galerkin projection-based recursive rational interpolation algorithms that interpolate at the mirror images of the ADI shifts. Their primary distinction lies in their pole-placement property, which guarantees that the projected matrix equation admits a unique solution \cite{zulfiqar2025unified}. In this section, we first leverage the Petrov-Galerkin projection-based interpretation of ADI methods to establish the theoretical foundation of the proposed low-rank solver. We then provide the algorithmic details of the solver and discuss a strategy for generating ADI shifts efficiently and automatically without user intervention. Finally, sample MATLAB-based implementations of the proposed algorithms are also presented.
\subsection{The Low-rank ADI Approach for general CAREs}
Define the matrices \(\hat{A}\), \(\tilde{B}\), \(\hat{B}\), \(\hat{R}\), \(\hat{C}\), and \(\hat{Z}\) as follows:
\begin{align}
\hat{A}=A+\tilde{B}\hat{Z}\hat{C},\quad \tilde{B}=\begin{bmatrix}0 & B_1\end{bmatrix},\quad \hat{B}=\begin{bmatrix}B_1 & B_2\end{bmatrix},\quad \hat{R}=\begin{bmatrix}R_1 & 0 \\ 0 & -R_2\end{bmatrix},\quad \hat{C}=\begin{bmatrix}C_1 \\ C_2\end{bmatrix},\quad \hat{Z}=\begin{bmatrix}Z & 0 \\ 0 & -R_1^{-1}\end{bmatrix}.\nonumber
\end{align}
Then the CARE \eqref{gen_ricc} can be rewritten as
\begin{align}
\hat{A}^\top XE + E^\top X \hat{A} - E^\top X \hat{B} \hat{R}^{-1} \hat{B}^\top X E + \hat{C}^\top \hat{Z} \hat{C} = 0.\nonumber
\end{align}

Let \(\{\alpha_i\}_{i=1}^{k} \subset \mathbb{C}_{-}\) be the ADI shifts used to obtain a low-rank approximation \(X \approx \hat{X} = W \tilde{X} W^\top\), where \(W\) satisfies the property
\begin{align}
\underset{i=1,\dots,k}{\text{span}}\left\{(-\alpha_i E^\top - \hat{A}^\top)^{-1} \hat{C}^\top\right\} \subset \mathrm{Ran}(W). \label{int_prop}
\end{align}
Furthermore, define the residual \(R_s\) for the approximation \(X \approx \hat{X}\) as
\[
R_s = \hat{A}^\top \hat{X} E + E^\top \hat{X} \hat{A} - E^\top \hat{X} \hat{B} \hat{R}^{-1} \hat{B}^\top \hat{X} E + \hat{C}^\top \hat{Z} \hat{C}.
\]
Then, for some generally unknown matrix \(V\) satisfying \(W^\top E V = I\), the residual in the general low-rank ADI method satisfies the Petrov–Galerkin projection condition \(V^\top R_s V = 0\).

Assume that complex-valued ADI shifts always occur in conjugate pairs, i.e., \(\alpha_{k+1} = \overline{\alpha_k}\) whenever \(\mathrm{Im}(\alpha_i) \neq 0\). Based on the ADI shifts \(\alpha_i\), define \(s_w^{(i)}\) and \(l_w^{(i)}\) as follows:
\begin{align}
s_w^{(i)} & = 
\begin{cases} 
-\alpha_i I_{pm}, \hspace*{5cm} \text{if } \mathrm{Im}(\alpha_i) = 0, \\
\begin{bmatrix} 
-\mathrm{Re}(\alpha_i)I_{pm} & -\mathrm{Im}(\alpha_i)I_{pm} \\ 
\mathrm{Im}(\alpha_i)I_{pm} & -\mathrm{Re}(\alpha_i) I_{pm}
\end{bmatrix},  \hspace*{1.6cm}\text{if } \mathrm{Im}(\alpha_i) \neq 0,
\end{cases}\label{sw}\\
l_w^{(i)} &= 
\begin{cases} 
-I_{pm}, \hspace*{5.4cm} \text{if } \mathrm{Im}(\alpha_i) = 0, \\
\begin{bmatrix} 
-I_{pm} & 0
\end{bmatrix}, \hspace*{4.5cm} \text{if } \mathrm{Im}(\alpha_i) \neq 0,
\end{cases}\label{lw}
\end{align}
where \(I_{pm}\) denotes the identity matrix of size \((p+m_1) \times (p+m_1)\).
\begin{proposition}
Let us assume that $s_w^{(i)}$ and $l_w^{(i)}$ are well-defined for the ADI shifts \(\{\alpha_i\}_{i=1}^{k} \subset \mathbb{C}_{-}\). Then, define \(S_w^{(i)}\), \(L_w^{(i)}\), \(W^{(i)}\), and \(\tilde{X}^{(i)}\) as follows:
\begin{align}
S_w^{(i)}&=\begin{bmatrix}S_w^{(i-1)}&\tilde{X}^{(i-1)}\Big((L_w^{(i-1)})^\top\hat{Z}l_w^{(i)}+(W^{(i-1)})^\top\hat{B}\hat{R}^{-1}\hat{B}^\top w_i\Big)\\0&s_w^{(i)}\end{bmatrix},\quad L_w^{(i)}=\begin{bmatrix}L_w^{(i-1)}&l_w^{(i)}\end{bmatrix},\label{SwLw}\\
W^{(i)}&=\begin{bmatrix}W^{(i-1)}&w_i\end{bmatrix},\quad C_{\perp}^{(i)}=\hat{C}-L_w^{(i)}\tilde{X}^{(i)}(W^{(i)})^\top E=C_{\perp}^{(i-1)}- l_w^{(i)}\tilde{x}_iw_i^\top E,\\ \tilde{X}^{(i)}&=\mathrm{blkdiag}\big(\tilde{X}^{(i-1)},\tilde{x}_i\big),\label{W_X}
\end{align}
where \((\tilde{x}_i)^{-1}\) solves the Lyapunov equation
\begin{align}
-(s_w^{(i)})^\top (\tilde{x}_i)^{-1}-(\tilde{x}_i)^{-1}s_w^{(i)}+(l_w^{(i)})^\top\hat{Z}l_w^{(i)}+w_i^\top\hat{B}\hat{R}^{-1}\hat{B}^\top w_i=0,\label{x_lyap}
\end{align}and \(w_i\) solves the Sylvester equation
\begin{align}
\Big(\hat{A}-\hat{B}\hat{R}^{-1}\hat{B}^\top W^{(i-1)}\tilde{X}^{(i-1)}(W^{(i-1)})^\top E\Big)^\top w_i-E^\top w_is_w^{(i)}+(C_{\perp}^{(i-1)})^\top\hat{Z}l_w^{(i)}=0.\label{wi}
\end{align}
Then \(W^{(i)}\) and \(\tilde{X}^{(i)}\) satisfy the linear matrix equations
\begin{align}
\hat{A}^\top W^{(i)}-E^\top W^{(i)} S_w^{(i)}+\hat{C}^\top\hat{Z}L_w^{(i)}=0,\label{radi_sylv}\\
-(S_w^{(i)})^\top (X^{(i)})^{-1}-(X^{(i)})^{-1}S_w^{(i)}+(L_w^{(i)})^\top\hat{Z}L_w^{(i)}+(W^{(i)})^\top\hat{B}\hat{R}^{-1}\hat{B}^\top W^{(i)}=0.\label{radi_lyap}
\end{align}
\end{proposition}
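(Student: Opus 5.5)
Note: \(X^{(i)}\) in \eqref{radi_lyap} is read as \(\tilde{X}^{(i)}\). The proof is by induction on \(i\), with the empty matrices \(W^{(0)}\), \(S_w^{(0)}\), \(L_w^{(0)}\), \(\tilde{X}^{(0)}\) and \(C_\perp^{(0)}=\hat{C}\) as the base case.

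Assume \eqref{radi_sylv}--\eqref{radi_lyap} hold at step \(i-1\). Partition \eqref{radi_sylv} at step \(i\) according to \(W^{(i)}=[W^{(i-1)}\ w_i]\), \(L_w^{(i)}=[L_w^{(i-1)}\ l_w^{(i)}]\), and the block upper-triangular \(S_w^{(i)}\) in \eqref{SwLw}. The first block column is exactly the step-\((i-1)\) Sylvester equation, so it holds by hypothesis. The second block column reads
\begin{align*}
\hat{A}^\top w_i - E^\top W^{(i-1)}\tilde{X}^{(i-1)}\Big((L_w^{(i-1)})^\top\hat{Z}l_w^{(i)}+(W^{(i-1)})^\top\hat{B}\hat{R}^{-1}\hat{B}^\top w_i\Big) - E^\top w_i s_w^{(i)} + \hat{C}^\top\hat{Z}l_w^{(i)}=0.
\end{align*}
Collect the terms linear in \(w_i\): \(\hat{A}^\top w_i\) and \(-E^\top W^{(i-1)}\tilde{X}^{(i-1)}(W^{(i-1)})^\top\hat{B}\hat{R}^{-1}\hat{B}^\top w_i\) combine into
\[
\big(\hat{A}-\hat{B}\hat{R}^{-1}\hat{B}^\top W^{(i-1)}\tilde{X}^{(i-1)}(W^{(i-1)})^\top E\big)^\top w_i .
\]
This uses the symmetry of \(\tilde{X}^{(i-1)}\) (induction: \eqref{x_lyap} gives symmetric \(\tilde{x}_i\)) and of \(\hat{R}\). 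The remaining terms are \(\hat{C}^\top\hat{Z}l_w^{(i)}-E^\top W^{(i-1)}\tilde{X}^{(i-1)}(L_w^{(i-1)})^\top\hat{Z}l_w^{(i)}\). Since \(\hat{Z}\) is symmetric, this equals \((C_\perp^{(i-1)})^\top\hat{Z}l_w^{(i)}\) by the definition \(C_\perp^{(i-1)}=\hat{C}-L_w^{(i-1)}\tilde{X}^{(i-1)}(W^{(i-1)})^\top E\). Hence the second block column is precisely \eqref{wi}. The recursive form of \(C_\perp^{(i)}\) follows directly from the block-diagonal structure of \(\tilde{X}^{(i)}\).

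For \eqref{radi_lyap}, use \((\tilde{X}^{(i)})^{-1}=\mathrm{blkdiag}((\tilde{X}^{(i-1)})^{-1},\tilde{x}_i^{-1})\) and expand the \(2\times 2\) block structure.
\begin{itemize}
\item \textbf{(1,1) block:} this is the step-\((i-1)\) equation, true by hypothesis.
\item \textbf{(2,2) block:} \(S_w^{(i)}\) has a zero (2,1) block, so the (2,2) entry of \((S_w^{(i)})^\top(\tilde{X}^{(i)})^{-1}\) is just \((s_w^{(i)})^\top\tilde{x}_i^{-1}\). The block is therefore exactly \eqref{x_lyap}.
\item \textbf{(1,2) block:} denote the off-diagonal block of \(S_w^{(i)}\) by \(\tilde{X}^{(i-1)}M\), where \(M=(L_w^{(i-1)})^\top\hat{Z}l_w^{(i)}+(W^{(i-1)})^\top\hat{B}\hat{R}^{-1}\hat{B}^\top w_i\). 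The term \(-(S_w^{(i)})^\top(\tilde{X}^{(i)})^{-1}\) contributes zero to this block, because the (2,1) block of \(S_w^{(i)}\) vanishes. The term \(-(\tilde{X}^{(i)})^{-1}S_w^{(i)}\) contributes \(-(\tilde{X}^{(i-1)})^{-1}\tilde{X}^{(i-1)}M=-M\). The quadratic terms contribute \((L_w^{(i-1)})^\top\hat{Z}l_w^{(i)}+(W^{(i-1)})^\top\hat{B}\hat{R}^{-1}\hat{B}^\top w_i=M\). The total is zero.
\item \textbf{(2,1) block:} it is the transpose of the (1,2) block, by symmetry of \(\hat{Z}\) and \(\hat{R}\).
\end{itemize}

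The main obstacle is bookkeeping rather than ideas. One must confirm that the off-diagonal block of \(S_w^{(i)}\) is designed exactly to cancel the coupling terms in the Lyapunov equation. One must also track symmetry of \(\tilde{X}^{(i-1)}\), \(\hat{Z}\) and \(\hat{R}\) when rewriting the Sylvester column as \eqref{wi}.

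Finally, a real solution exists in the complex-shift case. There \(s_w^{(i)}\) is the real \(2\times 2\) block form and \(l_w^{(i)}=[-I_{pm}\ 0]\), so \(w_i\) has two real block columns and all identities hold blockwise verbatim. Well-posedness of \eqref{wi} and \eqref{x_lyap}, including invertibility of the solution of \eqref{x_lyap}, is taken as part of the hypothesis that the quantities are well-defined.
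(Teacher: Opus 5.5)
Your proof is correct and follows the same route as the paper. The paper argues by induction on \(i\) with block substitution: the new block column of the Sylvester equation reduces to the defining equation for \(w_i\), the \((2,2)\) block of the Lyapunov equation is the defining equation for \(\tilde{x}_i^{-1}\), and the off-diagonal block of \(S_w^{(i)}\) is built to cancel the coupling terms. You simply write out the details the paper dismisses as ``readily noted,'' with one small misattribution: rewriting \(\hat{C}^\top\hat{Z}l_w^{(i)}-E^\top W^{(i-1)}\tilde{X}^{(i-1)}(L_w^{(i-1)})^\top\hat{Z}l_w^{(i)}\) as \((C_\perp^{(i-1)})^\top\hat{Z}l_w^{(i)}\) uses the symmetry of \(\tilde{X}^{(i-1)}\), not of \(\hat{Z}\).
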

\begin{proof}
Consider the following
\begin{align}
\mathrm{LHS}=&\hat{A}^\top -E^\top W^{(i)} S_w^{(i)}+\hat{C}^\top\hat{Z}L_w^{(i)}\nonumber\\
&=\hat{A}^\top -E^\top \begin{bmatrix}W^{(i-1)}&w_i\end{bmatrix} \begin{bsmallmatrix}S_w^{(i-1)}&\tilde{X}^{(i-1)}\Big((L_w^{(i-1)})^\top\hat{Z}l_w^{(i)}+(W^{(i-1)})^\top\hat{B}\hat{R}^{-1}\hat{B}^\top w_i\Big)\\0&s_w^{(i)}\end{bsmallmatrix}+\hat{C}^\top\hat{Z}\begin{bmatrix}L_w^{(i-1)}&l_w^{(i)}\end{bmatrix}.\nonumber
\end{align}
Starting from $i=1$, it can readily be noted that $\mathrm{LHS}=0$ since $w_i$ solves the Sylvester equation \eqref{wi}.

Similarly, by substituting $S_w^{(i)}$, $L_w^{(i)}$, and $\tilde{X}^{(i)}$ from \eqref{W_X} into \eqref{radi_lyap} and recursively increasing $i$ starting from $i=1$, it can readily be noted that the Lyapunov equation \eqref{radi_lyap} holds since $(\tilde{x}_i)$ solves the Lyapunov equation \eqref{x_lyap}.
\end{proof}

Due to the block triangular structure of \(S_w^{(i)}\), its eigenvalues are \(\alpha_1,\dots,\alpha_k\), each with multiplicity \(p+m_1\). Consequently, by the connection between Sylvester equations and rational Krylov subspaces established in \cite{gallivan2004sylvester}, it follows that \(W^{(i)}\) satisfies property \eqref{int_prop}. 

Let us define \(V^{(i)}\) satisfying the property \((W^{(i)})^\top E V^{(i)} = I\). Next, obtain the following projected matrices via the Petrov–Galerkin projection:
\begin{align}
\hat{A}_r^{(i)}&=(W^{(i)})^\top \hat{A}V^{(i)}=A_r^{(i)}+\tilde{B}_r^{(i)}\hat{Z}\hat{C}_r^{(i)}=A_r^{(i)}-B_{1,r}^{(i)}R_1^{-1}C_{2,r}^{(i)},& \tilde{B}_r^{(i)}&=(W^{(i)})^\top \tilde{B}=\begin{bmatrix}0&B_{1,r}^{(i)}\end{bmatrix},\nonumber\\
\hat{B}_r^{(i)}&=(W^{(i)})^\top \hat{B}=\begin{bmatrix}B_{1,r}^{(i)}&B_{2,r}^{(i)}\end{bmatrix},& \hat{C}_r^{(i)}&=\hat{C}V^{(i)}=\begin{bmatrix}C_{1,r}^{(i)}\\C_{2,r}^{(i)}\end{bmatrix}.\nonumber
\end{align}
Then, due to the connection between Sylvester equations and rational interpolation established in \cite{gallivan2004sylvester}, the following interpolation conditions hold for \(i = 1, \dots, k\):
\begin{align}
\hat{C}(-\alpha_i E-\hat{A})^{-1}\hat{B}=\hat{C}_r^{(i)}(-\alpha_i I-\hat{A}_r^{(i)})^{-1}\hat{B}_r^{(i)},\label{int_cond}
\end{align}for $i=1,\cdots,k$.

Premultiplying \eqref{radi_sylv} by \((V^{(i)})^\top\) yields
\begin{align}
(\hat{A}_r^{(i)})^\top-S_w^{(i)}+(\hat{C}_r^{(i)})^\top\hat{Z}L_w^{(i)}=0.\nonumber
\end{align}
Consequently, \(\hat{A}_r^{(i)} = (S_w^{(i)})^\top - (L_w^{(i)})^\top \hat{Z} \hat{C}_r^{(i)}\) can be parameterized in terms of \(\hat{C}_r^{(i)}\) without affecting the interpolation condition \eqref{int_cond}; cf. \cite{wolfthesis,panzerthesis,astolfi2010model}. The following theorem provides a specific choice of \(\hat{C}_r^{(i)}\) that guarantees that \(\tilde{X}^{(i)}\), which solves the Lyapunov equation \eqref{radi_lyap}, is also a stabilizing solution to the projected CARE
\begin{align}
(\hat{A}_r^{(i)})^\top \tilde{X}^{(i)}+ \tilde{X}^{(i)} \hat{A}_r^{(i)}- \tilde{X}^{(i)}\hat{B}_r^{(i)}\hat{R}^{-1}(\hat{B}_r^{(i)})^\top\tilde{X}^{(i)}+(\hat{C}_r^{(i)})^\top \hat{Z}\hat{C}_r^{(i)}&=0.\label{proj_ricc}
\end{align}
\begin{theorem}\label{th1}
Let \(\{\alpha_i\}_{i=1}^{k}\subset\mathbb{C}_{-}\) be the ADI shifts. Furthermore, let \(W^{(i)}\) solve the Sylvester equation \eqref{radi_sylv} with \(s_w^{(i)}\), \(l_w^{(i)}\), \(S_w^{(i)}\), \(L_w^{(i)}\), \(C_{\perp}^{(i)}\), and \(\tilde{X}^{(i)}\) defined in \eqref{sw}-\eqref{radi_lyap}. Assume that there exists a matrix $V^{(i)}$ satisfying $(W^{(i)})^\top E V^{(i)}=I$, thus ensuring \(\hat{A}_r^{(i)} = (W^{(i)})^\top \hat{A} V^{(i)} = (S_w^{(i)})^\top - (L_w^{(i)})^\top \hat{Z} \hat{C}_r^{(i)}\). When the free parameter \(\hat{C}_r^{(i)}\) is set to
\[
\hat{C}_r^{(i)} = \begin{bmatrix} \hat{c}_r^{(1)} & \cdots & \hat{c}_r^{(i)} \end{bmatrix} = L_w^{(i)} \tilde{X}^{(i)} = \begin{bmatrix} l_w^{(1)} \tilde{x}_1 & \cdots & l_w^{(i)} \tilde{x}_i \end{bmatrix},
\]
the following statements hold:
\begin{enumerate}
  \item \(\tilde{X}^{(i)}\) is a stabilizing solution to the projected CARE \eqref{proj_ricc} with the matrix
  \begin{align}
  \hat{A}_{\mathrm{cl}}^{(i)} &= \hat{A}_r^{(i)} - \hat{B}_r^{(i)} \hat{R}^{-1} (\hat{B}_r^{(i)})^\top \tilde{X}^{(i)} \nonumber \\
  &= A_r^{(i)} + B_{2,r}^{(i)} R_2^{-1} (B_{2,r}^{(i)})^\top \tilde{X}^{(i)} - B_{1,r}^{(i)} R_1^{-1} \big((B_{1,r}^{(i)})^\top \tilde{X}^{(i)} + C_{2,r}^{(i)}\big) \nonumber
  \end{align}
  whose eigenvalues are \(\alpha_1,\dots,\alpha_k\), each with multiplicity \(p+m_1\).
  \item The residual \(R_s^{(i)}\), which satisfies the Petrov–Galerkin projection condition \((V^{(i)})^\top R_s^{(i)} V^{(i)} = 0\), is given by \(R_s^{(i)} = (C_{\perp}^{(i)})^\top \hat{Z} C_{\perp}^{(i)}\).
  \item The gain matrix \(K_{\mathrm{gain}}\) can be approximated recursively as
  \[
  K_{\mathrm{gain}} \approx \tilde{K}_{\mathrm{gain}}^{(i)} =\tilde{K}_{\mathrm{gain}}^{(i-1)} + R_1^{-1} \big( B_1^\top w_i \tilde{x}_i w_i^\top E\big)\] with $\tilde{K}_{\mathrm{gain}}^{(0)}=R_1^{-1}C_2$.
  \item \(W^{(i)}\) solves the Sylvester equation
  \begin{align}
  \hat{A}^\top W^{(i)} - E^\top W^{(i)} (\hat{A}_r^{(i)})^\top + (C_{\perp}^{(i)})^\top \hat{Z} L_w^{(i)} = 0.\label{radi_sylv2}
  \end{align}
\end{enumerate}
\end{theorem}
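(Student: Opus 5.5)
Everything will be derived from the two identities of the Proposition, \eqref{radi_sylv} and \eqref{radi_lyap}, together with the parametrization $\hat A_r^{(i)}=(S_w^{(i)})^\top-(L_w^{(i)})^\top\hat Z\hat C_r^{(i)}$. Throughout I substitute the choice $\hat C_r^{(i)}=L_w^{(i)}\tilde X^{(i)}$. I read $X^{(i)}$ in \eqref{radi_lyap} as $\tilde X^{(i)}$ and assume $\tilde X^{(i)}$ is invertible, since \eqref{x_lyap} defines $(\tilde x_i)^{-1}$ blockwise.

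\textbf{Statement 1.} With the choice above, $\hat A_r^{(i)}=(S_w^{(i)})^\top-(L_w^{(i)})^\top\hat Z L_w^{(i)}\tilde X^{(i)}$, and I then compute $\hat A_{\mathrm{cl}}^{(i)}$ directly.
\begin{itemize}
\item Multiply \eqref{radi_lyap} on the left and right by $\tilde X^{(i)}$ to get
\[
-\tilde X^{(i)}(S_w^{(i)})^\top-S_w^{(i)}\tilde X^{(i)}+\tilde X^{(i)}\big((L_w^{(i)})^\top\hat ZL_w^{(i)}+\hat B_r^{(i)}\hat R^{-1}(\hat B_r^{(i)})^\top\big)\tilde X^{(i)}=0 .
\]
\item Expand the projected CARE \eqref{proj_ricc} with the parametrized $\hat A_r^{(i)}$ and $\hat C_r^{(i)}$. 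The term $\tilde X^{(i)}(L_w^{(i)})^\top\hat ZL_w^{(i)}\tilde X^{(i)}$ appears with coefficient $-2+1=-1$, and the remaining terms match the display above. Hence \eqref{proj_ricc} holds.
\item For stability, transpose the residual relation to write $\hat A_{\mathrm{cl}}^{(i)}=(S_w^{(i)})^\top-\big[(L_w^{(i)})^\top\hat ZL_w^{(i)}+\hat B_r^{(i)}\hat R^{-1}(\hat B_r^{(i)})^\top\big]\tilde X^{(i)}$. By \eqref{radi_lyap}, the bracket equals $(S_w^{(i)})^\top(\tilde X^{(i)})^{-1}+(\tilde X^{(i)})^{-1}S_w^{(i)}$.
\item Multiplying by $\tilde X^{(i)}$ gives $\hat A_{\mathrm{cl}}^{(i)}=-(\tilde X^{(i)})^{-1}S_w^{(i)}\tilde X^{(i)}$. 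This is similar to $-S_w^{(i)}$, whose eigenvalues are the $\alpha_j\in\mathbb C_-$ with multiplicity $p+m_1$, since $S_w^{(i)}$ is block triangular with diagonal blocks $s_w^{(j)}$, which have eigenvalues $-\alpha_j$.
\item The second expression for $\hat A_{\mathrm{cl}}^{(i)}$ follows by expanding $\hat A_r^{(i)}=A_r^{(i)}-B_{1,r}^{(i)}R_1^{-1}C_{2,r}^{(i)}$ together with the block structure of $\hat B_r^{(i)}$ and $\hat R$.
\end{itemize}

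\textbf{Statement 4.} Substitute $(\hat A_r^{(i)})^\top=S_w^{(i)}-\tilde X^{(i)}(L_w^{(i)})^\top\hat Z L_w^{(i)}$ into \eqref{radi_sylv2}. This gives
\[
\hat A^\top W^{(i)}-E^\top W^{(i)}S_w^{(i)}+E^\top W^{(i)}\tilde X^{(i)}(L_w^{(i)})^\top\hat ZL_w^{(i)}+(C_\perp^{(i)})^\top\hat ZL_w^{(i)} .
\]
Using $C_\perp^{(i)}=\hat C-L_w^{(i)}\tilde X^{(i)}(W^{(i)})^\top E$, the third term cancels the correction inside $(C_\perp^{(i)})^\top$. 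What remains is exactly \eqref{radi_sylv}, which vanishes.

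\textbf{Statement 2.} This is the main computational step.
\begin{itemize}
\item Write $\hat X=W^{(i)}\tilde X^{(i)}(W^{(i)})^\top$ and $\hat C=C_\perp^{(i)}+L_w^{(i)}\tilde X^{(i)}(W^{(i)})^\top E$.
\item Substitute $\hat A^\top W^{(i)}=E^\top W^{(i)}(\hat A_r^{(i)})^\top-(C_\perp^{(i)})^\top\hat ZL_w^{(i)}$ from Statement 4 into the residual $R_s^{(i)}$.
\item The terms collapse to $E^\top W^{(i)}\,[\text{projected CARE residual}]\,(W^{(i)})^\top E+(C_\perp^{(i)})^\top\hat ZC_\perp^{(i)}$. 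The cross terms containing $(C_\perp^{(i)})^\top\hat ZL_w^{(i)}\tilde X^{(i)}(W^{(i)})^\top E$ cancel between the expansion of $\hat C^\top\hat Z\hat C$ and the $\hat A$ terms.
\item By Statement 1 the bracket is zero, so $R_s^{(i)}=(C_\perp^{(i)})^\top\hat ZC_\perp^{(i)}$. The bookkeeping of these cross terms is where care is needed.
\item For the Petrov--Galerkin condition, note that $C_\perp^{(i)}V^{(i)}=\hat C_r^{(i)}-L_w^{(i)}\tilde X^{(i)}=0$ by the choice of $\hat C_r^{(i)}$. Hence $(V^{(i)})^\top R_s^{(i)}V^{(i)}=0$.
\end{itemize}

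\textbf{Statement 3.} Substitute $\hat X=\sum_j w_j\tilde x_jw_j^\top$, which holds by the block-diagonal structure of $\tilde X^{(i)}$, into $K_{\mathrm{gain}}=R_1^{-1}(B_1^\top XE+C_2)$. This yields $R_1^{-1}C_2+\sum_j R_1^{-1}B_1^\top w_j\tilde x_jw_j^\top E$, which is precisely the stated recursion with $\tilde K_{\mathrm{gain}}^{(0)}=R_1^{-1}C_2$.
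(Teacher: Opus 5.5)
Your proposal is correct and follows essentially the paper's Appendix A argument for items 1, 3 and 4: you obtain $\hat{A}_{\mathrm{cl}}^{(i)}=-(\tilde{X}^{(i)})^{-1}S_w^{(i)}\tilde{X}^{(i)}$ from \eqref{radi_lyap}, sum over the block-diagonal $\tilde{X}^{(i)}$ for the gain, and reduce \eqref{radi_sylv2} to \eqref{radi_sylv}. For item 2 you repackage the paper's direct substitution of \eqref{radi_sylv} and \eqref{radi_lyap} as ``full residual $=E^\top W^{(i)}\,[\text{left-hand side of }\eqref{proj_ricc}]\,(W^{(i)})^\top E+(C_{\perp}^{(i)})^\top\hat{Z}C_{\perp}^{(i)}$'', which is equivalent, and your observation $C_{\perp}^{(i)}V^{(i)}=\hat{C}_r^{(i)}-L_w^{(i)}\tilde{X}^{(i)}=0$ usefully verifies the Petrov--Galerkin condition that the paper only asserts.
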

\begin{proof}
The proof is given in Appendix A.
\end{proof}
\subsection{Algorithm}
We now present the algorithmic implementation of the recursive formulas derived in the previous subsection. The matrices \(W^{(i)}\), \(X^{(i)}\), \(C_{\perp}^{(i)}\), and \(K_{\mathrm{gain}}^{(i)}\) each admit recursive updates. The Sylvester equation \eqref{wi} essentially reduces to a single shifted linear solve. A key advantage of low-rank ADI methods is that they avoid the need to explicitly solve any projected matrix equation. To ensure that our proposed algorithm does not require the explicit solution of any projected matrix equation, we replace the Lyapunov equation \eqref{x_lyap} with its analytical closed-form expressions.

When the ADI shift \(\alpha_i\) is real-valued, \((\tilde{x}_i)^{-1}\) can be computed using the following analytical expression:
\begin{align}
(\tilde{x}_i)^{-1} = -\frac{1}{2\,\mathrm{Re}(\alpha_i)}\Big( \hat{Z} + w_i^\top \hat{B} \hat{R}^{-1} \hat{B}^\top w_i \Big). \label{x_real}
\end{align}
When the ADI shift \(\alpha_i\) is complex-valued, \((\tilde{x}_i)^{-1}\) can be computed using the following analytical expression:
\begin{align}
(\tilde{x}_i)^{-1} = \begin{bmatrix} p_{11} & p_{12} \\ p_{12}^\top & p_{22} \end{bmatrix}, \label{x_complex}
\end{align}
where
\begin{align}
p_{11} &= -\frac{1}{\Delta} \Big( \gamma_1 \big( \hat{Z} + q_{11} \big) + \gamma_2 q_{22} + \gamma_3 \big( q_{12} + q_{12}^\top \big) \Big), \\[4pt]
p_{12} &= \frac{1}{\Delta} \Big( \gamma_3 \big( \hat{Z} + q_{11} - q_{22} \big) - \gamma_1 q_{12} + \gamma_2 q_{12}^\top \Big), \\[4pt]
p_{22} &= \frac{1}{\Delta} \Big( \gamma_3 \big( q_{12} + q_{12}^\top \big) - \gamma_2 \big( \hat{Z} + q_{11} \big) - \gamma_1 q_{22} \Big), \\[4pt]
\gamma_1 &= 2\,\mathrm{Re}(\alpha_i)^2 + \mathrm{Im}(\alpha_i)^2, \quad
\gamma_2 = \mathrm{Im}(\alpha_i)^2, \quad
\gamma_3 = \mathrm{Re}(\alpha_i)\,\mathrm{Im}(\alpha_i), \\[4pt]
w_i &= \begin{bmatrix} w_i^{R} & w_i^{I} \end{bmatrix}, \quad
\Delta = 4\,\mathrm{Re}(\alpha_i) \Big( \big(\mathrm{Re}(\alpha_i)\big)^2 + \big(\mathrm{Im}(\alpha_i)\big)^2 \Big), \\[4pt]
q_{11} &= (w_i^{R})^\top \hat{B} \hat{R}^{-1} \hat{B}^\top w_i^{R}, \quad
q_{12} = (w_i^{R})^\top \hat{B} \hat{R}^{-1} \hat{B}^\top w_i^{I}, \quad
q_{22} = (w_i^{I})^\top \hat{B} \hat{R}^{-1} \hat{B}^\top w_i^{I}.\label{x_complex2}
\end{align}

The pseudocode for the generalized low-rank ADI method for CAREs (G-RADI) is presented in Algorithm \ref{alg1}.
\begin{algorithm}
\caption{G-RADI}\label{alg1}
\DontPrintSemicolon
\KwIn{
  Matrices of CARE \eqref{gen_ricc}: $E$, $A$, $B_1$, $B_2$, $R_1$, $R_2$, $C_1$, $Z$, $C_2$; ADI shifts: $\{\alpha_i\}_{i=1}^k \in \mathbb{C}_{-}$; Tolerance: $\tau\in[0,1]$.
}
\KwOut{
  Approximation of $X$: $X \approx W^{(i)}\tilde{X}^{(i)}(W^{(i)})^\top$; Approximation of gain matrix $K_{\mathrm{gain}}$: $K_{\mathrm{gain}}\approx \tilde{K}_{\mathrm{gain}}=R_1^{-1}\big(B_1^\top W^{(i)}\tilde{X}^{(i)}(W^{(i)})^\top E+C_2\big)$, Residual: $R_{s}^{(i)} = \big(C_{\perp}^{(i)}\big)^\top\hat{Z} C_{\perp}^{(i)}$.
}

\BlankLine
\textbf{Initialization:}
$\hat{A}=A-B_1R_1^{-1}C_2$, $\hat{B} = \begin{bmatrix}B_1 & B_2\end{bmatrix}$, $\hat{R} = \mathrm{blkdiag}(R_1, -R_2)$, $\hat{C} = \begin{bmatrix}C_1 \\ C_2\end{bmatrix}$, $\hat{Z} = \mathrm{blkdiag}(Z, -R_1^{-1})$, $C_{\perp}^{(0)} = \hat{C}$, $W^{(0)} = [\;]$, $\tilde{X}^{(0)} = [\;]$, $\hat{K}^{(0)} = 0$, $\tilde{K}_\mathrm{gain}^{(0)}=R_1^{-1}C_2$, $i = 1$.

\BlankLine
\While{$\dfrac{\big\|\big(C_{\perp}^{(i-1)}\big)^\top\hat{Z} C_{\perp}^{(i-1)}\big\|}{\big\|\hat{C}^\top\hat{Z}\hat{C}\big\|} \geq \tau$}{
  Solve for $v_i$: $\big(\hat{A}^\top - (\hat{K}^{(i-1)})^\top \hat{B}^\top +\alpha_i E^\top \big) v_i = \big(C_{\perp}^{(i-1)}\big)^\top$.\label{step1}
   
  \uIf{$\mathrm{Im}(\alpha_i) = 0$}{
    Set $w_i = v_i \hat{Z}$ and expand $W^{(i)} = \begin{bmatrix} W^{(i-1)} & w_i \end{bmatrix}$. \\
    Compute $(\tilde{x}_i^{(i)})^{-1}$ from \eqref{x_real} and expand $\tilde{X}^{(i)} = \mathrm{blkdiag}\big(\tilde{X}^{(i-1)}, \tilde{x}_i\big)$. \\
    Update $C_{\perp}^{(i)} = C_{\perp}^{(i-1)} + \tilde{x}_i w_i^\top E$, $\hat{K}^{(i)} = \hat{K}^{(i-1)} + \hat{R}^{-1}\big(\hat{B}^\top w_i \tilde{x}_i w_i^\top E\big)$, $\tilde{K}_\mathrm{gain}^{(i)}=\tilde{K}_\mathrm{gain}^{(i-1)}+R_1^{-1}\big(B_1^\top w_i \tilde{x}_i w_i^\top E\big)$, and $i=i+1$.
  }
  \uElse{
    Set $w_i = \begin{bmatrix} w_i^{R} & w_i^{I} \end{bmatrix} = \begin{bmatrix} \mathrm{Re}(v_i\hat{Z}) & \mathrm{Im}(v_i\hat{Z}) \end{bmatrix}$ and expand $W^{(i)} = \begin{bmatrix} W^{(i-1)} & w_i \end{bmatrix}$. \\
    Compute $(\tilde{x}_i^{(i)})^{-1}=\begin{bmatrix} p_{11} & p_{12} \\ p_{12}^\top & p_2 \end{bmatrix}$ from \eqref{x_complex}-\eqref{x_complex2} and expand $\tilde{X}^{(i)} = \mathrm{blkdiag}\big(\tilde{X}^{(i-1)}, \tilde{x}_i\big)$. \\
    Update $C_{\perp}^{(i)} = C_{\perp}^{(i-1)} + 
      \begin{bmatrix} I & 0 \end{bmatrix}
      \begin{bmatrix} p_{11} & p_{12} \\ p_{12}^\top & p_2 \end{bmatrix}^{-1}
      \begin{bmatrix} (w_i^{R})^\top \\ (w_i^{I})^\top \end{bmatrix} E$, $\hat{K}^{(i)} = \hat{K}^{(i-1)} + \hat{R}^{-1}\big(\hat{B}^\top w_i \tilde{x}_i w_i^\top E\big)$, $\tilde{K}_\mathrm{gain}^{(i)}=\tilde{K}_\mathrm{gain}^{(i-1)}+R_1^{-1}\big(B_1^\top w_i \tilde{x}_i w_i^\top E\big)$, and $i=i+2$.
  }
  
}
\end{algorithm}
\subsection{Efficient Implementation of G-RADI}
In Step \ref{step1} of G-RADI, the shifted linear systems \((A^\top+\alpha_i E^\top)v_i = (\mathcal{C}_{\perp}^{(i)})^\top\) can be solved using the Sherman–Morrison–Woodbury (SMW) formula after rewriting the equation appropriately; see \cite{golub2013matrix} for details. The same technique is used in the original RADI algorithm \cite{benner2018radi} for the standard CARE form \eqref{gen_ricc} (i.e., when \(B_2=0\), \(C_2=0\), \(R_1=I\), and \(Z=I\)). However, applying the SMW formula increases the number of columns on the right-hand side from \(p+m_1\) in \((C_{\perp}^{(i)})^\top\) to \(p+3m_1+m_2\) in \((\mathcal{C}_{\perp}^{(i)})^\top\). Since \(p\), \(m_1\), and \(m_2\) are small, these solves remain efficient. Nevertheless, the SMW formula is not strictly necessary; we can avoid the increase in right-hand side columns. Instead, the Cholesky Factor ADI (CF-ADI) algorithm for Lyapunov equations \cite{benner2013reformulated} can serve as the base algorithm, following the Unified ADI (UADI) framework introduced in \cite{zulfiqar2025unified}. We therefore propose an implementation of G-RADI for the shifted linear systems \((A^\top+\alpha_i E^\top)z_i = (\mathcal{C}_{\perp}^{(i)})^\top\) where the number of right-hand side columns remains \(p+m_1\). This approach extends the applicability of UADI, which can handle multiple Lyapunov, Sylvester, and Riccati equations by reusing the same shifted linear solves across several low-rank ADI runs. This reuse yields a significant return on the computational investment, as the shifted solves dominate the overall cost in low-rank ADI methods for large-scale \(n\).

Let us define $s_{\mathrm{lyap}}^{(i)}$ and $l_v^{(i)}$ as follows:
\begin{align}
s_{\mathrm{lyap}}^{(i)} & = 
\begin{cases} 
-\alpha_i I_{pm}, \hspace*{3.75cm} \text{if } \mathrm{Im}(\alpha_i) = 0, \\[6pt]
\gamma_i^2\begin{bmatrix} 
I_{pm} & \frac{\sqrt{1+\delta_i^2}}{2\delta_i}I_{pm} \\ 
&\\
-\frac{\sqrt{1+\delta_i^2}}{2\delta_i}I_{pm} & 0
\end{bmatrix}, \hspace*{0.25cm} \text{if } \mathrm{Im}(\alpha_i) \neq 0,
\end{cases}\label{sz}\\[6pt]
l_{\mathrm{lyap}}^{(i)} &= 
\begin{cases} 
-\gamma_i I_{pm}, \hspace*{3.8cm} \text{if } \mathrm{Im}(\alpha_i) = 0, \\[6pt]
-\sqrt{2}\gamma_i\begin{bmatrix} 
I_{pm} & 0
\end{bmatrix}, \hspace*{2.35cm} \text{if } \mathrm{Im}(\alpha_i) \neq 0,
\end{cases}\label{lz}
\end{align}where $\gamma_i=\sqrt{-2\mathrm{Re}(\alpha_i)}$ and $\delta_i=\frac{\mathrm{Re}(\alpha_i)}{\mathrm{Im}(\alpha_i)}$.

Now define $S_{\mathrm{lyap}}^{(i)}$, $L_{\mathrm{lyap}}^{(i)}$, and $Z_{\mathrm{lyap}}^{(i)}$ recursively as follows:
\begin{align}
S_{\mathrm{lyap}}^{(i)}=\begin{bmatrix}S_{\mathrm{lyap}}^{(i-1)}&(L_{\mathrm{lyap}}^{(i-1)})^\top l_{\mathrm{lyap}}^{(i)}\\0& s_{\mathrm{lyap}}^{(i)}\end{bmatrix},\quad L_{\mathrm{lyap}}^{(i)}=\begin{bmatrix}L_{\mathrm{lyap}}^{(i-1)}&l_{\mathrm{lyap}}^{(i)}\end{bmatrix},\quad Z_{\mathrm{lyap}}^{(i)}=\begin{bmatrix}Z_{\mathrm{lyap}}^{(i-1)}&z_i\end{bmatrix}.\label{SLZ}
\end{align}
Moreover, assume that $Z_{\mathrm{lyap}}^{(i)}$ satisfies the Sylvester equation
\begin{align}
A^\top Z_{\mathrm{lyap}}^{(i)}-E^\top Z_{\mathrm{lyap}}^{(i)} S_{\mathrm{lyap}}^{(i)}+\hat{C}^\top L_{\mathrm{lyap}}^{(i)}=0.
\end{align}
As shown in \cite{wolfthesis,zulfiqar2025unified}, the low-rank approximate solution of the Lyapunov equation
\begin{align}
A^\top Q_{\mathrm{lyap}} E+ E^\top Q_{\mathrm{lyap}} A+ \hat{C}^\top \hat{C}=0\label{lyap_eq}
\end{align} obtained via the CF-ADI method \cite{benner2013reformulated} is given by $Q_{\mathrm{lyap}}\approx Z_{\mathrm{lyap}}^{(i)}(Z_{\mathrm{lyap}}^{(i)})^\top$.

Let us define \( v_i \) as  
\begin{align}
v_i=\big(A^\top+\alpha_i E^\top\big)^{-1}\big(\mathcal{C}_{\perp}^{(i-1)}\big)^\top,
\end{align}
where \( \mathcal{C}_{\perp}^{(i)} \) is updated recursively as  
\begin{align}
\mathcal{C}_{\perp}^{(i)} &= 
\begin{cases} 
\mathcal{C}_{\perp}^{(i-1)}+\gamma_i^2v_i^\top E, \hspace*{4cm} \text{if } \mathrm{Im}(\alpha_i) = 0, \\
\mathcal{C}_{\perp}^{(i-1)}+2\gamma_i^2\big(\mathrm{Re}(v_i)+\delta_i\mathrm{Im}(v_i)\big)^\top E, \hspace*{1cm} \text{if } \mathrm{Im}(\alpha_i) \neq 0,
\end{cases}\label{C_}
\end{align} with \( \mathcal{C}_{\perp}^{(0)} = \hat{C} \).

As shown in \cite{benner2013reformulated}, $z_i$ is computed in CF-ADI \cite{benner2013reformulated} to approximate $Q_{\mathrm{lyap}}$ as follows:
\begin{align}
z_i &= 
\begin{cases} 
\gamma_iv_i, \hspace*{8cm} \text{if } \mathrm{Im}(\alpha_i) = 0, \\
 \begin{bmatrix}\sqrt{2}\gamma_i\big(\mathrm{Re}(v_i)+\delta_i\mathrm{Im}(v_i)\big)& \sqrt{2}\gamma_i\sqrt{\delta_i^2+1}\mathrm{Im}(v_i)\end{bmatrix}, \hspace*{1.1cm} \text{if } \mathrm{Im}(\alpha_i) \neq 0,
\end{cases}\label{z}
\end{align}
The matrix $Z_{\mathrm{lyap}}^{(i)}$ also satisfies the following Sylvester equation:
\begin{align}
A^\top Z_{\mathrm{lyap}}^{(i)}+E^\top Z_{\mathrm{lyap}}^{(i)} (S_{\mathrm{lyap}}^{(i)})^\top+(\mathcal{C}_{\perp}^{(i)})^\top L_{\mathrm{lyap}}^{(i)}=0;\label{V_lyap_sylv2}
\end{align}cf. \cite{wolfthesis,zulfiqar2025unified}.

The next theorem shows that $w_i$ in G-RADI can be extracted from $Z_{\mathrm{lyap}}^{(i)}$.
\begin{theorem}\label{th2}
Let us assume that all the variables in Theorem \ref{th1} and \eqref{sz}-\eqref{V_lyap_sylv2} hold. Define $T_{\mathrm{radi}}^{(i)}$, $\bar{B}_{\mathrm{lyap}}^{(i)}$, and $\hat{B}_{\mathrm{lyap}}^{(i)}$ as follows:
\begin{align}
T_{\mathrm{radi}}^{(i)}=\begin{bmatrix}T_{\mathrm{radi}}^{(i-1)}&t_1^{(i)}\\0&t_2^{(i)}\end{bmatrix},\quad \bar{B}_{\mathrm{lyap}}^{(i)}=(Z_{\mathrm{lyap}}^{(i)})^\top\tilde{B},\quad \hat{B}_{\mathrm{lyap}}^{(i)}=(Z_{\mathrm{lyap}}^{(i)})^\top\hat{B}.
\end{align}
Further, define $t_i$ as follows:
\begin{align}
t_i=\Big(-(S_{\mathrm{lyap}}^{(i)})^\top+(L_{\mathrm{lyap}}^{(i)})^\top\hat{Z}(\bar{B}_{\mathrm{lyap}}^{(i)})^\top-\begin{bmatrix}T_{\mathrm{radi}}^{(i-1)}\\0\end{bmatrix}\tilde{X}^{(i-1)}\hat{B}_r^{(i-1)}\hat{R}^{-1}&(\hat{B}_{\mathrm{lyap}}^{(i)})^\top+\alpha_iI\Big)^{-1}
\Big((L_{\mathrm{lyap}}^{(i)})^\top
\nonumber\\
&-\begin{bmatrix}T_{\mathrm{radi}}^{(i-1)}\\0\end{bmatrix}(\hat{C}_r^{(i-1)})^\top\Big)\hat{Z}.\label{ti}
\end{align}
Next, set $t_{\mathrm{radi}}^{(i)}$ as follows:
\begin{align}
t_{\mathrm{radi}}^{(i)}=\begin{bmatrix}t_1^{(i)}\\t_2^{(i)}\end{bmatrix} &= 
\begin{cases} 
t_i, \hspace*{3.8cm} \mathrm{if } \hspace*{0.5cm}\mathrm{Im}(\alpha_i) = 0, \\
 \begin{bmatrix}\mathrm{Re}(t_i)& \mathrm{Im}(t_i)\end{bmatrix}, \hspace*{1.41cm} \mathrm{if }\hspace*{0.5cm} \mathrm{Im}(\alpha_i) \neq 0,
\end{cases}\label{t}
\end{align}
Then $w_i$ and $W^{(i)}$ can be extracted from $Z_{\mathrm{lyap}}^{(i)}$ as follows:
\[
w_i=Z_{\mathrm{lyap}}^{(i)}t_{\mathrm{radi}}^{(i)}\quad\text{and}\quad W^{(i)}=Z_{\mathrm{lyap}}^{(i)}T_{\mathrm{radi}}^{(i)}.
\]
\end{theorem}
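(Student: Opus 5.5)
We argue by induction on $i$: we show that the column block $w_i$ produced by G-RADI lies in $\mathrm{Ran}(Z_{\mathrm{lyap}}^{(i)})$ and equals $Z_{\mathrm{lyap}}^{(i)}t_{\mathrm{radi}}^{(i)}$. The claim $W^{(i)}=Z_{\mathrm{lyap}}^{(i)}T_{\mathrm{radi}}^{(i)}$ then follows by stacking. Since $Z_{\mathrm{lyap}}^{(i)}=[Z_{\mathrm{lyap}}^{(i-1)}\;z_i]$ and $T_{\mathrm{radi}}^{(i)}$ is block upper triangular, we have $Z_{\mathrm{lyap}}^{(i)}T_{\mathrm{radi}}^{(i)}=[Z_{\mathrm{lyap}}^{(i-1)}T_{\mathrm{radi}}^{(i-1)}\;\;Z_{\mathrm{lyap}}^{(i)}t_{\mathrm{radi}}^{(i)}]=[W^{(i-1)}\;w_i]$ by the induction hypothesis. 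The base case $i=1$ is the same computation with all $(i-1)$ terms empty.

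The core step is to make an ansatz $w_i=Z_{\mathrm{lyap}}^{(i)}t$ (first for a complex shift treated with complex arithmetic) and substitute it into the defining Sylvester equation \eqref{wi} for $w_i$. For this, \eqref{wi} is written with $s_w^{(i)}$ replaced by the scalar $-\alpha_i$ acting on the complex vector. This is exactly the form used in Step~\ref{step1} of G-RADI: $(\hat{A}^\top-(\hat{K}^{(i-1)})^\top\hat{B}^\top+\alpha_iE^\top)w_i=-(C_{\perp}^{(i-1)})^\top\hat{Z}l_w^{(i)}$ (up to the sign convention $w_i=v_i\hat{Z}$). Here we write $\hat{K}^{(i-1)}=\hat{R}^{-1}\hat{B}^\top W^{(i-1)}\tilde{X}^{(i-1)}(W^{(i-1)})^\top E$.

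We then reduce every term using the second Sylvester equation \eqref{V_lyap_sylv2}. In it we rewrite $A^\top=\hat{A}^\top-\hat{C}^\top\hat{Z}\tilde{B}^\top$, which gives $\hat{A}^\top Z_{\mathrm{lyap}}^{(i)}=-E^\top Z_{\mathrm{lyap}}^{(i)}(S_{\mathrm{lyap}}^{(i)})^\top-(\mathcal{C}_{\perp}^{(i)})^\top L_{\mathrm{lyap}}^{(i)}+\hat{C}^\top\hat{Z}(\bar{B}_{\mathrm{lyap}}^{(i)})^\top$. The feedback term becomes $E^\top W^{(i-1)}\tilde{X}^{(i-1)}\hat{B}_r^{(i-1)}\hat{R}^{-1}(\hat{B}_{\mathrm{lyap}}^{(i)})^\top t$, and we insert $W^{(i-1)}=Z_{\mathrm{lyap}}^{(i-1)}T_{\mathrm{radi}}^{(i-1)}$ from the induction hypothesis. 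The right-hand side $(C_{\perp}^{(i-1)})^\top$ is expanded as $\hat{C}^\top-E^\top W^{(i-1)}\tilde{X}^{(i-1)}(L_w^{(i-1)})^\top=\hat{C}^\top-E^\top Z_{\mathrm{lyap}}^{(i)}\begin{bsmallmatrix}T_{\mathrm{radi}}^{(i-1)}\\0\end{bsmallmatrix}(\hat{C}_r^{(i-1)})^\top$, using $\hat{C}_r^{(i-1)}=L_w^{(i-1)}\tilde{X}^{(i-1)}$ from Theorem~\ref{th1}. Similarly, $\mathcal{C}_{\perp}^{(i)}$ is related to $\hat{C}$ through \eqref{V_lyap_sylv2} and the original Sylvester equation for $Z_{\mathrm{lyap}}^{(i)}$, namely $\mathcal{C}_{\perp}^{(i)}=\hat{C}-L_{\mathrm{lyap}}^{(i)}\cdots$. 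After these substitutions every term has the form either $E^\top Z_{\mathrm{lyap}}^{(i)}(\cdot)$ or $\hat{C}^\top(\cdot)$.

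The $\hat{C}^\top$ terms must cancel. This is where the factor $\hat{Z}$ on the right of \eqref{ti} and the term $(L_{\mathrm{lyap}}^{(i)})^\top\hat{Z}(\bar{B}_{\mathrm{lyap}}^{(i)})^\top$ enter. Once they cancel, the equation reads $E^\top Z_{\mathrm{lyap}}^{(i)}(M t-N)=0$, where $M$ is the matrix being inverted in \eqref{ti} and $N$ is the right factor there. Hence $t=M^{-1}N=t_i$ solves it. Because the shifted operator is assumed nonsingular (the same assumption made in Theorem~\ref{th1}), the solution $w_i$ of \eqref{wi} is unique, so $w_i=Z_{\mathrm{lyap}}^{(i)}t_i$.

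For a complex shift, $Z_{\mathrm{lyap}}^{(i)}$ is real and contains the two real blocks of $z_i$ from \eqref{z}. Taking real and imaginary parts therefore gives $[\mathrm{Re}(w)\;\mathrm{Im}(w)]=Z_{\mathrm{lyap}}^{(i)}[\mathrm{Re}(t_i)\;\mathrm{Im}(t_i)]$, which is \eqref{t}. This agrees with G-RADI's choice $w_i=[\mathrm{Re}(v_i\hat{Z})\;\mathrm{Im}(v_i\hat{Z})]$.

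The main obstacle is the bookkeeping in the cancellation of the $\hat{C}^\top$ terms. It requires precisely matching $\mathcal{C}_{\perp}^{(i)}$, $C_{\perp}^{(i-1)}$, $L_{\mathrm{lyap}}^{(i)}$ and $l_w^{(i)}$, including the different scalings $\gamma_i$ and $\sqrt2\gamma_i$ in the complex case. The first thing to try is to verify the real-shift case with $i=1$ explicitly: there $z_1=\gamma_1v_1$, and one expects $t_1$ to reduce to a scalar multiple of $\hat{Z}$ adjusted by the $\tilde{B}$ correction. That case should fix the sign conventions, after which the general step follows the same pattern.
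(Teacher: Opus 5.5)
Your route is the paper's: substitute the ansatz $Z_{\mathrm{lyap}}^{(i)}t$ into the shifted system for $w_i$, reduce with the CF-ADI relations, invoke uniqueness, and stack. The stacking and the uniqueness step are fine.

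The decisive step, however, is only asserted, and as you describe it, it fails. After your substitutions, the terms multiplying $\hat{C}^\top$ do not cancel identically. Their coefficient is $\big(\hat{Z}(\bar{B}_{\mathrm{lyap}}^{(i)})^\top-L_{\mathrm{lyap}}^{(i)}\big)t-\hat{Z}$, which vanishes only for particular $t$. In addition, once you eliminate $\mathcal{C}_{\perp}^{(i)}$ in favour of $\hat{C}$, the coefficient of $E^\top Z_{\mathrm{lyap}}^{(i)}$ contains $(L_{\mathrm{lyap}}^{(i)})^\top L_{\mathrm{lyap}}^{(i)}t$. It is therefore not $Mt-N$, and it becomes $Mt-N$ only after you use that same vanishing condition. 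So you face $i+1$ block equations in $i$ block unknowns, and ``$t=M^{-1}N$ solves it'' settles only $i$ of them. Nothing in the proposal shows that $t_i$ also satisfies the remaining one. Saying that the factor $\hat Z$ and the term $(L_{\mathrm{lyap}}^{(i)})^\top\hat{Z}(\bar{B}_{\mathrm{lyap}}^{(i)})^\top$ ``enter'' the cancellation does not supply this.

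The missing idea is structural. Instead of eliminating $\mathcal{C}_{\perp}^{(i)}$, write $\hat{C}^\top=(\mathcal{C}_{\perp}^{(i)})^\top+E^\top Z_{\mathrm{lyap}}^{(i)}(L_{\mathrm{lyap}}^{(i)})^\top$. With your induction hypothesis, this gives for every $t$ the exact identity
\begin{align*}
&\Big(\hat{A}^\top-(\hat{K}^{(i-1)})^\top\hat{B}^\top+\alpha_iE^\top\Big)Z_{\mathrm{lyap}}^{(i)}t-(C_{\perp}^{(i-1)})^\top\hat{Z}\\
&\qquad=E^\top Z_{\mathrm{lyap}}^{(i)}\big(Mt-N\big)+(\mathcal{C}_{\perp}^{(i)})^\top\Big(\big(\hat{Z}(\bar{B}_{\mathrm{lyap}}^{(i)})^\top-L_{\mathrm{lyap}}^{(i)}\big)t-\hat{Z}\Big).
\end{align*}
Now compare the last block rows of $M$ and $N$:
\begin{itemize}
\item The last block row of $-(S_{\mathrm{lyap}}^{(i)})^\top+\alpha_iI$ is $\big[\gamma_iL_{\mathrm{lyap}}^{(i-1)}\;\;2\alpha_iI\big]=\gamma_iL_{\mathrm{lyap}}^{(i)}$, because $\gamma_i^2=-2\alpha_i$.
\item The last block row of $(L_{\mathrm{lyap}}^{(i)})^\top$ is $-\gamma_iI$.
\item $\begin{bsmallmatrix}T_{\mathrm{radi}}^{(i-1)}\\0\end{bsmallmatrix}$ has a zero last block row, so the feedback term and the $\hat C_r^{(i-1)}$ term contribute nothing there.
\end{itemize}
Hence the last block row of $M$ is $\gamma_i\big(L_{\mathrm{lyap}}^{(i)}-\hat{Z}(\bar{B}_{\mathrm{lyap}}^{(i)})^\top\big)$ and that of $N$ is $-\gamma_i\hat{Z}$. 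The last block row of $Mt_i=N$ is therefore exactly the vanishing of the second term, and the full system $Mt_i=N$ kills the first, so $t=t_i$ works.

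This is precisely what the paper does with its row $a_{21}t_1^{(i)}+a_{22}t_2^{(i)}=-\gamma_i\hat{Z}$, which it uses to eliminate the coefficient of $(\mathcal{C}_{\perp}^{(i-1)})^\top$. Your proposed $i=1$ check would have exposed this, since there $Mt=N$ consists of that single row.

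For complex shifts, taking real and imaginary parts is not enough on its own. The analogous cancellation is a linear combination of the last two block rows of $Mt=N$, and it relies on $-\alpha_i$ being an eigenvalue of $s_{\mathrm{lyap}}^{(i)}$.
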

\begin{proof}
The proof is given in Appendix B.
\end{proof}
Note that the computation of $t_{\mathrm{radi}}^{(i)}$ involves a linear solve of the form $\mathcal{A}x=b$ with $\mathcal{A}\in\mathbb{R}^{i(p+m_1)\times i(p+m_1)}$ and $b\in\mathbb{R}^{i(p+m_1)\times (p+m_1)}$. Unlike the case where the SMW formula is used to compute $v_i$ in Step \eqref{step1} of Algorithm \ref{alg1}, this linear solve is small-scale and can be computed cheaply since $p\ll n$, $m_1\ll n$, and $i\ll n$. Thus G-RADI can be implemented efficiently using CF-ADI as the base algorithm, which involves one large-scale linear solve $(A^\top+\alpha_i E^\top)v_i=\mathcal{C}_{\perp}^{(i-1)}$ with $p+m_1$ columns on the right-hand side instead of $p+3m_1+m_2$. The pseudo-code of this implementation is presented in Algorithm \ref{alg2}. The Unified G-RADI (UG-RADI) simultaneously computes the low-rank approximation of the Lyapunov equation \eqref{lyap_eq} and CARE \eqref{gen_ricc} with shared linear solves, significantly enhancing the capabilities of UADI, which first proposed sharing linear solves among various low-rank ADI solvers in \cite{zulfiqar2025unified}. This unification is possible since all the existing low-rank ADI algorithms perform interpolation at the mirror images of the ADI shifts recursively; see \cite{zulfiqar2025unified} for details.
\begin{algorithm}
\caption{UG-RADI}\label{alg2}
\DontPrintSemicolon
\KwIn{
  Matrices of CARE \eqref{gen_ricc}: $E$, $A$, $B_1$, $B_2$, $R_1$, $R_2$, $C_1$, $Z$, $C_2$; ADI shifts: $\{\alpha_i\}_{i=1}^k \in \mathbb{C}_{-}$; Tolerance: $\tau\in[0,1]$.
}
\KwOut{
  Approximation of $X$: $X \approx Z_{\mathrm{lyap}}^{(i)}T_{\mathrm{radi}}^{(i)}\tilde{X}^{(i)}(T_{\mathrm{radi}}^{(i)})^\top(Z_{\mathrm{lyap}}^{(i)})^\top$; Approximation of gain matrix $K_{\mathrm{gain}}$: $K_{\mathrm{gain}}\approx \tilde{K}_{\mathrm{gain}}=R_1^{-1}\big(B_1^\top Z_{\mathrm{lyap}}^{(i)}T_{\mathrm{radi}}^{(i)}\tilde{X}^{(i)}(T_{\mathrm{radi}}^{(i)})^\top(Z_{\mathrm{lyap}}^{(i)})^\top E+C_2\big)$, Residual: $R_{s}^{(i)} = \big(C_{\perp}^{(i)}\big)^\top\hat{Z} C_{\perp}^{(i)}$.
}
\textbf{Initialization:}
$\tilde{B} = \begin{bmatrix}0 & B_1\end{bmatrix}$, $\hat{B} = \begin{bmatrix}B_1 & B_2\end{bmatrix}$, $\hat{R} = \mathrm{blkdiag}(R_1, -R_2)$, $\hat{C} = \begin{bmatrix}C_1 \\ C_2\end{bmatrix}$, $\hat{Z} = \mathrm{blkdiag}(Z, -R_1^{-1})$, $C_{\perp}^{(0)} = \hat{C}$, $\mathcal{C}_{\perp}^{(0)} = \hat{C}$, $Z_{\mathrm{lyap}}^{(0)} = [\;]$, $T_{\mathrm{radi}}^{(0)}=[\;]$, $\tilde{X}^{(0)} = [\;]$, $\tilde{K}_\mathrm{gain}^{(0)}=R_1^{-1}C_2$, $S_{\mathrm{lyap}}^{(0)}=[\;]$, $L_{\mathrm{lyap}}^{(0)}=[\;]$, $\bar{B}_{\mathrm{lyap}}^{(0)}=[\;]$, $\hat{B}_{\mathrm{lyap}}^{(0)}=[\;]$, $\hat{B}_r^{(0)}=[\;]$, $\hat{C}_r^{(0)}=[\;]$, $i = 1$.

\While{$\dfrac{\big\|\big(C_{\perp}^{(i-1)}\big)^\top\hat{Z} C_{\perp}^{(i-1)}\big\|}{\big\|\hat{C}^\top\hat{Z}\hat{C}\big\|} \geq \tau$}{
  Solve for $v_i$: $\big(A^\top +\alpha_i E^\top \big) v_i = \big(\mathcal{C}_{\perp}^{(i-1)}\big)^\top$.
  
  \uIf{$\mathrm{Im}(\alpha_i) = 0$}{
    Set $\gamma_i=\sqrt{-2\alpha_i}$, $z_i = \gamma_iv_i$ and expand $Z_{\mathrm{lyap}}^{(i)} = \begin{bmatrix} Z_{\mathrm{lyap}}^{(i-1)} & z_i \end{bmatrix}$. \\
    Expand $\bar{B}_{\mathrm{lyap}}^{(i)}=\begin{bsmallmatrix}\bar{B}_{\mathrm{lyap}}^{(i-1)}\\z_i^\top \tilde{B}\end{bsmallmatrix}$ and $\hat{B}_{\mathrm{lyap}}^{(i)}=\begin{bsmallmatrix}\hat{B}_{\mathrm{lyap}}^{(i-1)}\\z_i^\top \hat{B}\end{bsmallmatrix}$.\\
    Set $s_{\mathrm{lyap}}^{(i)}=-\alpha_i I_{pm}$ and $l_{\mathrm{lyap}}^{(i)}=-\gamma_iI_{pm}$, and expand $S_{\mathrm{lyap}}^{(i)}=\begin{bsmallmatrix}S_{\mathrm{lyap}}^{(i-1)}&(L_{\mathrm{lyap}}^{(i-1)})^\top l_{\mathrm{lyap}}^{(i)}\\0& s_{\mathrm{lyap}}^{(i)}\end{bsmallmatrix}$ and $L_{\mathrm{lyap}}^{(i)}=\begin{bmatrix}L_{\mathrm{lyap}}^{(i-1)}&l_{\mathrm{lyap}}^{(i)}\end{bmatrix}$.\\ 
    Compute $t_i$ from \eqref{t}, set $t_\mathrm{radi}^{(i)}=\begin{bsmallmatrix}t_1^{(i)}\\t_2^{(i)}\end{bsmallmatrix}=t_i$, and expand $T_{\mathrm{radi}}^{(i)}=\begin{bsmallmatrix}T_{\mathrm{radi}}^{(-1)}&t_1^{(i)}\\0&t_2^{(i)}\end{bsmallmatrix}$.\\
    Set $w_i=Z_{\mathrm{lyap}}^{(i)}t_\mathrm{radi}^{(i)}$, compute $(\tilde{x}_i^{(i)})^{-1}$ from \eqref{x_real}, and expand $\tilde{X}^{(i)} = \mathrm{blkdiag}\big(\tilde{X}^{(i-1)}, \tilde{x}_i\big)$, $\hat{B}_r^{(i)}=\begin{bsmallmatrix}\hat{B}_r^{(i-1)}\\w_i^\top\hat{B}\end{bsmallmatrix}$, and $\hat{C}_r^{(i)}=\begin{bsmallmatrix}\hat{C}_r^{(i-1)}&-\tilde{x}_i\end{bsmallmatrix}$. \\
    Update $\mathcal{C}_{\perp}^{(i)}=\mathcal{C}_{\perp}^{(i-1)}+\gamma_iz_i^\top E$, $C_{\perp}^{(i)} = C_{\perp}^{(i-1)} + \tilde{x}_i w_i^\top E$, $\tilde{K}_\mathrm{gain}^{(i)}=\tilde{K}_\mathrm{gain}^{(i-1)}+R_1^{-1}\big(B_1^\top w_i \tilde{x}_i w_i^\top E\big)$, and $i=i+1$.
  }
  \uElse{
    Set $\gamma_i=\sqrt{-2\mathrm{Re}(\alpha_i)}$, $\delta_i=\frac{\mathrm{Re}(\alpha_i)}{\mathrm{Im}(\alpha_i)}$, $z_i = \begin{bsmallmatrix}\sqrt{2}\gamma_i\big(\mathrm{Re}(v_i)+\delta_i\mathrm{Im}(v_i)\big)& \sqrt{2}\gamma_i\sqrt{\delta_i^2+1}\mathrm{Im}(v_i)\end{bsmallmatrix}$ and expand $Z_{\mathrm{lyap}}^{(i)} = \begin{bmatrix} Z_{\mathrm{lyap}}^{(i-1)} & z_i \end{bmatrix}$. \\
    Expand $\bar{B}_{\mathrm{lyap}}^{(i)}=\begin{bsmallmatrix}\bar{B}_{\mathrm{lyap}}^{(i-1)}\\z_i^\top \tilde{B}\end{bsmallmatrix}$ and $\hat{B}_{\mathrm{lyap}}^{(i)}=\begin{bsmallmatrix}\hat{B}_{\mathrm{lyap}}^{(i-1)}\\z_i^\top \hat{B}\end{bsmallmatrix}$.\\
    Set $s_{\mathrm{lyap}}^{(i)}=\gamma_i^2\begin{bsmallmatrix} 
I_{pm} & \frac{\sqrt{1+\delta_i^2}}{2\delta_i}I_{pm} \\
-\frac{\sqrt{1+\delta_i^2}}{2\delta_i}I_{pm} & 0
\end{bsmallmatrix}$ and $l_{\mathrm{lyap}}^{(i)}=-\sqrt{2}\gamma_i\begin{bsmallmatrix} 
I_{pm} & 0
\end{bsmallmatrix}$, and expand $S_{\mathrm{lyap}}^{(i)}=\begin{bsmallmatrix}S_{\mathrm{lyap}}^{(i-1)}&(L_{\mathrm{lyap}}^{(i-1)})^\top l_{\mathrm{lyap}}^{(i)}\\0& s_{\mathrm{lyap}}^{(i)}\end{bsmallmatrix}$ and $L_{\mathrm{lyap}}^{(i)}=\begin{bsmallmatrix}L_{\mathrm{lyap}}^{(i-1)}&l_{\mathrm{lyap}}^{(i)}\end{bsmallmatrix}$.\\ 
    Compute $t_i$ from \eqref{t}, set $t_\mathrm{radi}^{(i)}=\begin{bsmallmatrix}t_1^{(i)}\\t_2^{(i)}\end{bsmallmatrix}=\begin{bsmallmatrix}\mathrm{Re}(t_i)& \mathrm{Im}(t_i)\end{bsmallmatrix}$, and expand $T_{\mathrm{radi}}^{(i)}=\begin{bsmallmatrix}T_{\mathrm{radi}}^{(-1)}&t_1^{(i)}\\0&t_2^{(i)}\end{bsmallmatrix}$.\\
    Set $w_i= \begin{bsmallmatrix} w_i^{R} & w_i^{I} \end{bsmallmatrix}=Z_{\mathrm{lyap}}^{(i)}t_\mathrm{radi}^{(i)}$, compute $(\tilde{x}_i^{(i)})^{-1}=\begin{bsmallmatrix} p_{11} & p_{12} \\ p_{12}^\top & p_2 \end{bsmallmatrix}$ from \eqref{x_complex}-\eqref{x_complex2}, and expand $\tilde{X}^{(i)} = \mathrm{blkdiag}\big(\tilde{X}^{(i-1)}, \tilde{x}_i\big)$, $\hat{B}_r^{(i)}=\begin{bsmallmatrix}\hat{B}_r^{(i-1)}\\w_i^\top\hat{B}\end{bsmallmatrix}$, and $\hat{C}_r^{(i)}=\begin{bmatrix}\hat{C}_r^{(i-1)}&\begin{bsmallmatrix} -I_{pm} & 0 \end{bsmallmatrix}\tilde{x}_i\end{bmatrix}$. \\
    Update $\mathcal{C}_{\perp}^{(i)} = \mathcal{C}_{\perp}^{(i-1)}+2\gamma_i^2\big(\mathrm{Re}(v_i)+\delta_i\mathrm{Im}(v_i)\big)^\top E$, $C_{\perp}^{(i)} = C_{\perp}^{(i-1)} + 
      \begin{bsmallmatrix} I_{pm} & 0 \end{bsmallmatrix}
      \begin{bsmallmatrix} p_{11} & p_{12} \\ p_{12}^\top & p_2 \end{bsmallmatrix}^{-1}
      \begin{bsmallmatrix} (w_i^{R})^\top \\ (w_i^{I})^\top \end{bsmallmatrix} E$, $\tilde{K}_\mathrm{gain}^{(i)}=\tilde{K}_\mathrm{gain}^{(i-1)}+R_1^{-1}\big(B_1^\top w_i \tilde{x}_i w_i^\top E\big)$, and $i=i+2$.
  }
}
\end{algorithm}
\subsection{Connection with Rational Krylov Subspace-based Approach}
Let us define $\tilde{S}_w^{(i)}$ and $\tilde{L}_w^{(i)}$ as
\[
\tilde{S}_w^{(i)}=(T_{\mathrm{radi}}^{(i)})^{-1}S_{\mathrm{lyap}}^{(i)}T_{\mathrm{radi}}^{(i)},\quad \text{and} \quad \tilde{L}_w^{(i)}=L_{\mathrm{lyap}}^{(i)}T_{\mathrm{radi}}^{(i)}.
\]
Then an immediate consequence of Theorem \ref{th2} is that $W^{(i)}$ solves a third Sylvester equation
\begin{align}
A^\top W^{(i)}-E^\top W^{(i)}\tilde{S}_w^{(i)}+\hat{C}^\top \tilde{L}_w^{(i)}=0\label{sylv3}
\end{align}
and thus it also satisfies the property
\[
\underset{i=1,\dots,k}{\text{span}}\left\{(-\alpha_i E^\top - A^\top)^{-1} \hat{C}^\top\right\} \subset \mathrm{Ran}(W^{(i)})
\]
due to the connection between Sylvester equations and rational interpolation established in \cite{gallivan2004sylvester}. Furthermore, since $Z_{\mathrm{lyap}}^{(i)}$ also uniquely solves \eqref{V_lyap_sylv2}, $W^{(i)}$ solves the following fourth Sylvester equation
\begin{align}
A^\top W^{(i)}-E^\top W^{(i)}(\tilde{A}_r^{(i)})^\top+(\mathcal{C}_{\perp}^{(i)})^\top \tilde{L}_w^{(i)}=0
\end{align} with $\tilde{A}_r^{(i)}=-(T_{\mathrm{radi}}^{(i)})^\top S_{\mathrm{lyap}}^{(i)}(T_{\mathrm{radi}}^{(i)})^{-\top}$.

Thus Theorem \ref{th2} essentially establishes the connection of G-RADI with rational Krylov-subspace-based approximation of $X$, just like the connection between standard RADI \cite{benner2018radi} (which is a special case of G-RADI) and Krylov-subspace-based approximation is established in \cite{bertram2024family}.
\subsection{Automatic Shift Generation}
ADI methods are essentially recursive rational interpolation algorithms that interpolate at the mirror images of the ADI shifts. Consequently, the shift selection in ADI methods should adhere to the same principles used in rational interpolation. In rational interpolation, interpolating at the mirror images of the dominant poles generally yields good accuracy \cite{gugercin2008h_2}. Unsurprisingly, using dominant poles as ADI shifts in low-rank ADI methods also leads to a rapid decline in the residuals, as reported in \cite{saak2009efficient,benner2014self}. The automatic shift generation procedure discussed in this subsection follows directly from the discussion in \cite{zulfiqar2025unified}.

Define \(G(s)\), \(\tilde{G}(s)\), \(G_\perp^{(i)}(s)\), and \(\tilde{G}_{\perp}^{(i)}(s)\) as follows:
\begin{align}
G(s) &= C (sE - \hat{A})^{-1} \hat{B}, \nonumber \\
\tilde{G}(s) &= \hat{C}_r^{(i)} \big( sI - \hat{A}_r^{(i)} \big)^{-1} \hat{B}_r^{(i)}, \nonumber \\
G_\perp^{(i)}(s) &= C_{\perp}^{(i)} (sE - \hat{A})^{-1} \hat{B}, \nonumber \\
\tilde{G}_{\perp}^{(i)}(s) &= C_{\perp}^{(i)} W^{(i)} \Big( s (W^{(i)})^\top E W^{(i)} - (W^{(i)})^\top \hat{A} W^{(i)} \Big)^{-1} (W^{(i)})^\top \hat{B}. \nonumber
\end{align}
Recall from Theorem \ref{th1} that \(W^{(i)}\) satisfies the Sylvester equations \eqref{radi_sylv} and \eqref{radi_sylv2}. Owing to the connection between Sylvester equations and rational interpolation established in \cite{gallivan2004sylvester}, it is clear that \(\tilde{G}(s)\) interpolates \(G(s)\) at \((-\alpha_1, \dots, -\alpha_i)\), whereas \(\tilde{G}_{\perp}^{(i)}(s)\) interpolates \(G_\perp^{(i)}(s)\) at the poles of \(\hat{A}_r^{(i)}\). Furthermore, as discussed in \cite{zulfiqar2025unified}, \(G_\perp^{(i)}(s)\) is the deflated version of \(G(s)\); that is, the peaks in the frequency-domain plot of \(G(s)\) that have already been captured by \(\tilde{G}(s)\) are flattened out in the frequency-domain plot of \(G_\perp^{(i)}(s)\) \cite{rommes2007methods}. Although \(G(s)\) and \(G_\perp^{(i)}(s)\) share the same poles, the strongly observable poles of \(G(s)\) that have been captured by \(\tilde{G}(s)\) are effectively deflated, making those poles poorly observable in \(G_\perp^{(i)}(s)\). When all the peaks in the frequency domain of \(G_\perp^{(i)}(s)\) have been flattened out, the residual \(R_s^{(i)}\) drops significantly. This is essentially how subspace-accelerated dominant pole estimation (SADPA) works \cite{rommes2008modal,martins1996computing}, with the main difference being that SADPA applies deflation only after confirming that a dominant pole has been captured. ADI methods, in contrast, perform deflation at every iteration, regardless of whether the ADI shift \(\alpha_i\) has led to the capture of a dominant pole in \(\tilde{G}(s)\); see \cite{zulfiqar2025unified} for further details. Nevertheless, the shift generation strategy of SADPA can be adopted, since both SADPA and G-RADI (and ADI methods in general) share a similar recursive interpolatory mechanism \cite{mengi2022large}. 

Let us define \(W_{\mathrm{proj}}\) as follows:
\[
W_{\mathrm{proj}} = \mathrm{orth}\Big( \begin{bmatrix} w_1, \dots, w_i \end{bmatrix} \Big)
\]
with implicit restart, i.e., when the number of columns exceeds a user-defined limit, the previous history of \(w_i\) is discarded. Implicit restart has a rich history of use in eigenvalue problems, including SADPA \cite{saad2011numerical,rommes2008modal}.

Next, project \((E, A, C_{\perp}^{(i)})\) via \(W_{\mathrm{proj}}\) as follows:
\[
E_r = (W_{\mathrm{proj}})^\top E W_{\mathrm{proj}}, \quad 
A_r = (W_{\mathrm{proj}})^\top A W_{\mathrm{proj}}, \quad 
C_r = C_{\perp}^{(i)} W_{\mathrm{proj}}.
\]

Then compute the eigenvalue decomposition of \(A_r E_r^{-1}\) as
\[
A_r E_r^{-1} = T \, \mathrm{diag}\big( \lambda_1, \dots, \lambda_r \big) T^{-1}.
\]

Define \(r_j = C_{\perp}^{(i)} T(:, j)\). The most observable pole of \(A_r E_r^{-1}\) is the pole \(\lambda_j\) corresponding to the largest residue
\[
\phi_j = \frac{\| r_j \|_2^2}{| \mathrm{Re}(\lambda_j) |},
\]
cf. \cite{rommes2007methods,mengi2022large}. Sort the columns of \(T\) and the eigenvalues \(\lambda_j\) in descending order of the residues \(\phi_j\). The pole \(\lambda_j\) with the largest \(\phi_j\) can be used as the next ADI shift.
\subsection{Sample MATLAB-based Implementations}
Sample MATLAB-based implementations of Algorithm \ref{alg1} (using the SMW formula) and Algorithm \ref{alg2} are provided in Appendices C and D. Algorithm \ref{alg1}'s implementation also computes the matrices \(S_w^{(i)}\) and \(L_w^{(i)}\) when the appropriate flag variable is set to \(1\). Users can generate these auxiliary matrices to verify the mathematical results presented in the paper. The algorithms can use pre-specified ADI shifts if provided by the user. Otherwise, they use the subspace-accelerated shift generation strategy discussed in the previous subsection to generate shifts automatically, once the user has supplied an initial shift and the maximum allowable number of columns of \(W_{\mathrm{proj}}\) for implicit restart.
\section{Numerical Results}
In this section, the numerical performance of the SMW formula-based G-RADI and UG-RADI is tested by considering CAREs associated with two large-scale state-space models. The MATLAB codes and data required to reproduce the results in this section are publicly available at \cite{mycode}. All tests are performed using MATLAB R2025b on a Windows 11 laptop equipped with 32 GB of random access memory (RAM) and an Intel(R) Core(TM) Ultra 9 285H 2.9 GHz processor.

The first ADI shift in all experiments is set to \(-0.001\). The remaining shifts are generated automatically using the subspace-accelerated strategy discussed in the previous section. The number of allowed iterations is $50$. The tolerance $\tau$ is set to $10^{-8}$.
\subsection{Steel Profile Model}
This benchmark consists of a semi-discretized heat transfer moldel for optimal cooling of steel profiles \cite{benner2005semi}, also known as the rail model. The dynamical system is a $12,65,537$-order steel profile model taken from \cite{saak2021mm}. The dimensions of the matrices are as follows: \(E \in \mathbb{R}^{12,65,537 \times 12,65,537}\), \(A \in \mathbb{R}^{12,65,537 \times 12,65,537}\), \(B \in \mathbb{R}^{12,65,537 \times 7}\), and \(C \in \mathbb{R}^{6 \times 12,65,537}\). The matrix \(B_1\) is the first four columns of \(B\); the remaining three form \(B_2\). \(C_1\) is the first two rows of \(C\); the remaining four form \(C_2\). Furthermore, \(Z\), \(R_1\), and \(R_2\) are the following indefinite matrices:
\[
Z =\begin{bmatrix}0.1631 &   0.8128\\
    0.8128  &  0.2355\end{bmatrix},\quad
R_1 =\begin{bmatrix}0.9571  &  0.5263  &  0.6276  &  0.3459\\
    0.5263  &  0.5816 &   0.5266  &  0.7908\\
    0.6276  &  0.5266 &   0.2404  &  0.4062\\
    0.3459  &  0.7908 &   0.4062  &  0.7139\end{bmatrix},\quad
R_2 =\begin{bmatrix}0.7223  &  0.7430  &  0.8722\\
    0.7430  &  0.1107 &   0.9064\\
    0.8722   & 0.9064  &  0.1739\end{bmatrix}.
\]
For implicit restart in the proposed shift generation strategy, the maximum number of columns in \(W_{\mathrm{proj}}\) is set to \(18\). The decay in the normalized residual is plotted in Figure \ref{fig1}. G-RADI/UG-RADI converged in \(36\) iterations. The MATLAB implementation of G-RADI based on the SMW formula took \(441.6141\) seconds, whereas UG-RADI took only \(175.3958\) seconds, demonstrating substantial computational savings for UG-RADI.
\begin{figure}[!h]
  \centering
  \includegraphics[width=12cm]{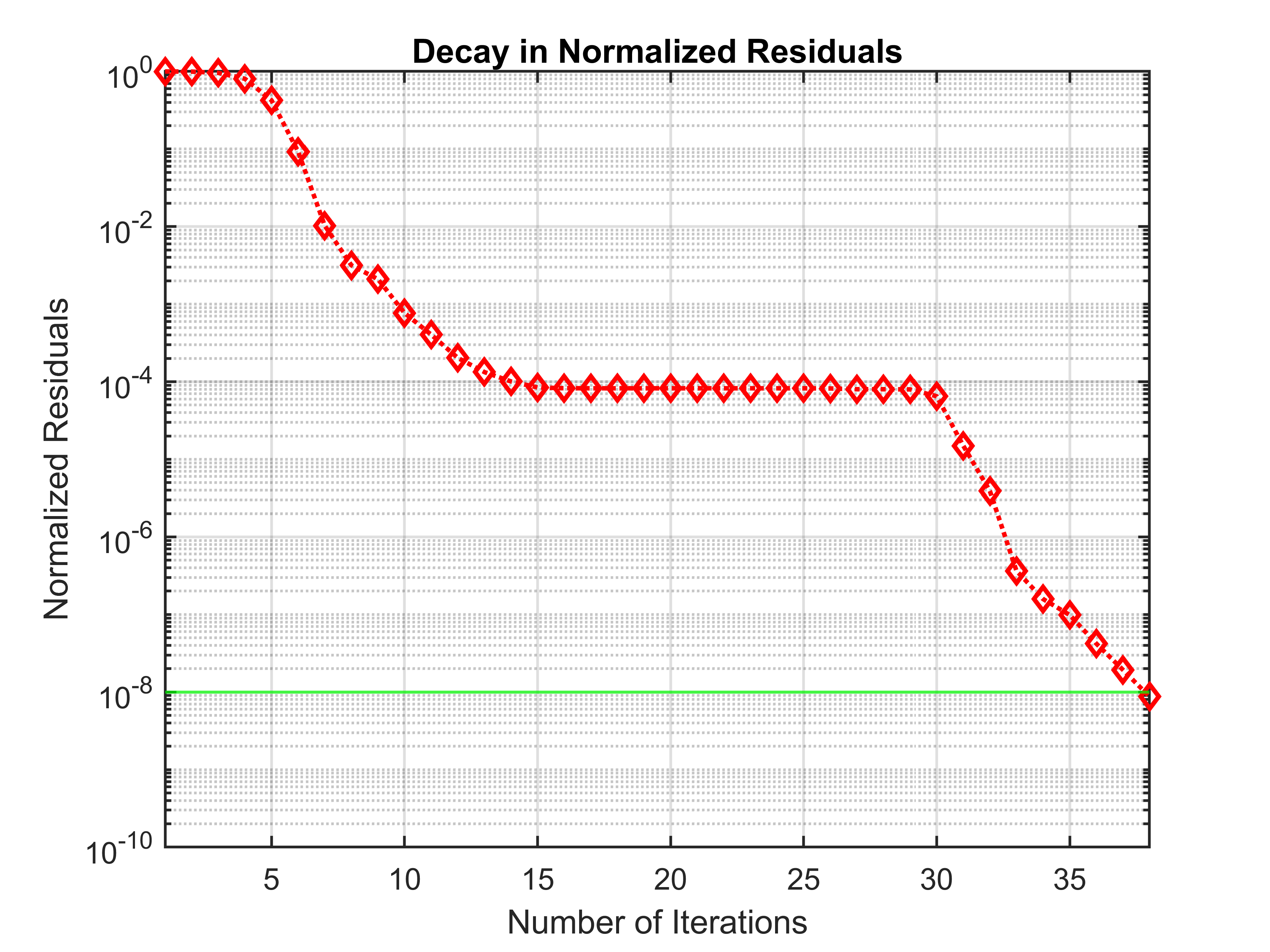}
  \caption{Normalized residual for \(X\)}\label{fig1}
\end{figure}
\subsection{RLC Ladder}
This example considers the RLC network model from \cite{gugercin2004survey}. It consists of $5\times 10^6$ segments of the RLC ladder described in \cite{gugercin2004survey}, resulting in a large-scale state-space model with the following dimensions: \(E \in \mathbb{R}^{10^7 \times 10^7}\), \(A \in \mathbb{R}^{10^7 \times 10^7}\), \(B \in \mathbb{R}^{10^7 \times 1}\), \(C \in \mathbb{R}^{1 \times 10^7}\), and \(D\in\mathbb{R}^{1\times 1}\). For implicit restart in the proposed shift generation strategy, the maximum number of columns in \(W_{\mathrm{proj}}\) is set to \(8\) in this example.

The first CARE considered for this model is the standard CARE. Let \(Q_{\mathrm{ricc}}\) solve the following CARE
\[
A^\top Q_{\mathrm{ricc}}E + E^\top Q_{\mathrm{ricc}}A - E^\top Q_{\mathrm{ricc}} B B^\top Q_{\mathrm{ricc}} E + C^\top C = 0.
\] 
This can be solved using UG-RADI by setting \(B_1 = B\), \(B_2 = [\;]\), \(R_1 = I\), \(R_2 = [\;]\), \(C_1 = C\), \(Z = I\), and \(C_2 = [\;]\). UG-RADI converged in 16 iterations in \(105.9932\) seconds. The decay in normalized residual is plotted in Figure \ref{fig2}.
\begin{figure}[!h]
  \centering
  \includegraphics[width=12cm]{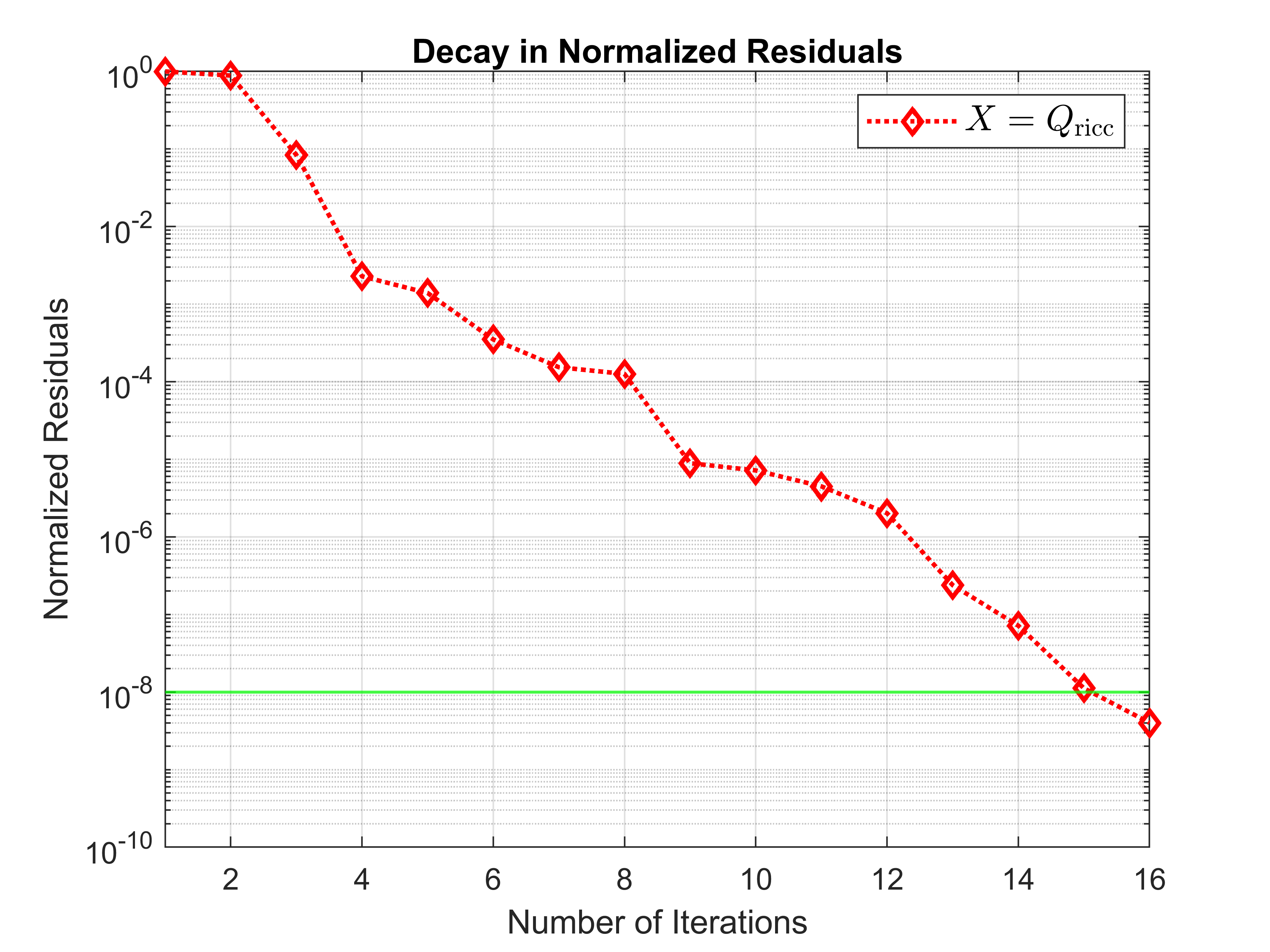}
  \caption{Normalized residual for \(Q_{\mathrm{ricc}}\)}\label{fig2}
\end{figure}

Let \(Q_{\mathrm{ricc}}^{\prime}\) solve the following CARE
\[
A^\top Q_{\mathrm{ricc}}^{\prime}E + E^\top Q_{\mathrm{ricc}}^{\prime}A + E^\top Q_{\mathrm{ricc}}^{\prime} B B^\top Q_{\mathrm{ricc}}^{\prime} E + C^\top C = 0.
\] 
This can be solved using UG-RADI by setting \(B_1 = [\;]\), \(B_2 = B\), \(R_1 = [\;]\), \(R_2 = I\), \(C_1 = C\), \(Z = I\), and \(C_2 = [\;]\). UG-RADI converged in 16 iterations in \(56.9545\) seconds. The decay in normalized residual is plotted in Figure \ref{fig3}.
\begin{figure}[!h]
  \centering
  \includegraphics[width=12cm]{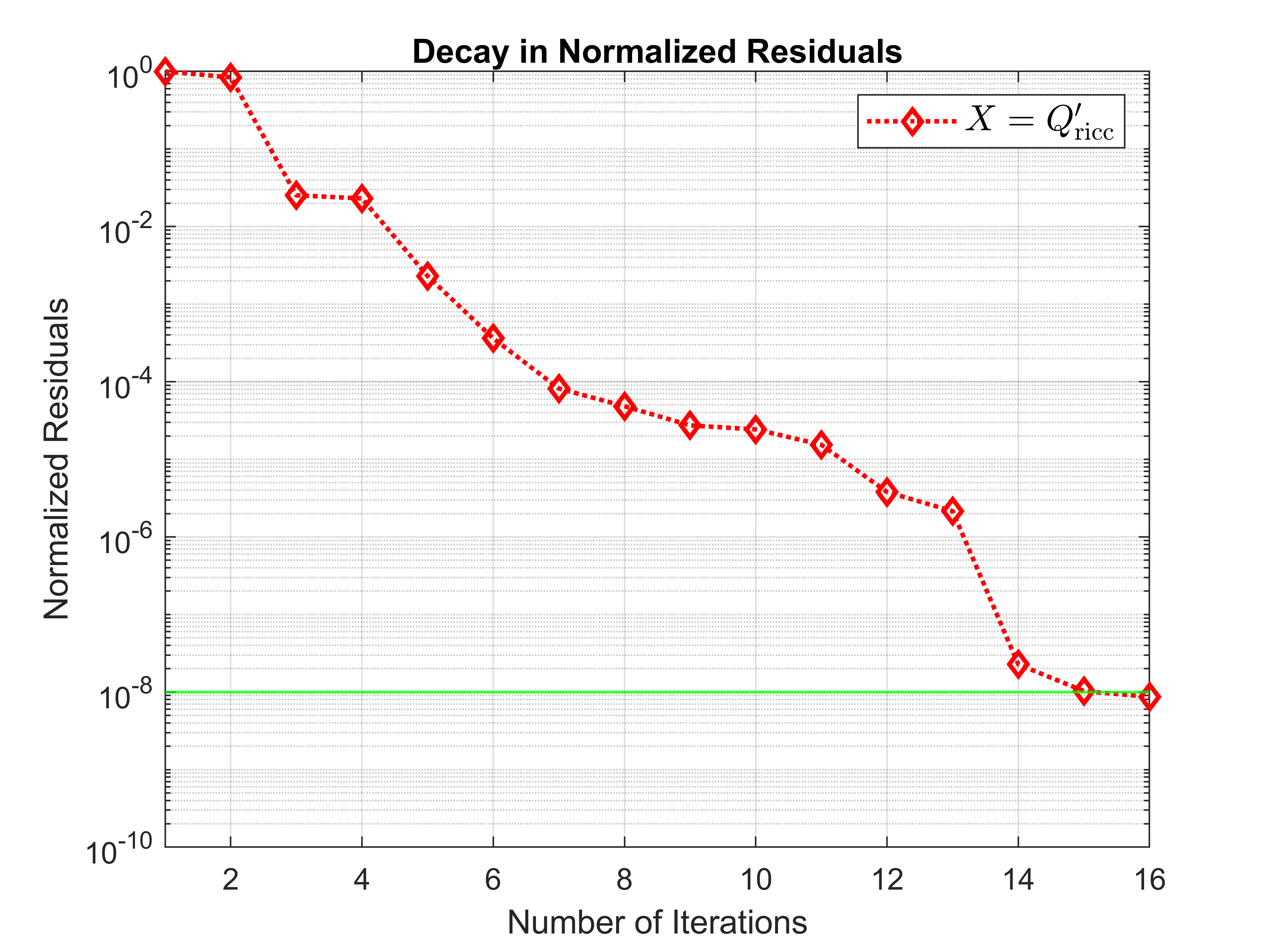}
  \caption{Normalized residual for \(Q_{\mathrm{ricc}}^{\prime}\)}\label{fig3}
\end{figure}

Let \(Q_{\mathrm{pr}}\) solve the following CARE
\[
A^\top Q_{\mathrm{pr}}E + E^\top Q_{\mathrm{pr}}A + (C - B^\top Q_{\mathrm{pr}}E)^\top (D + D^\top)^{-1} (C - B^\top Q_{\mathrm{pr}}E) = 0.
\] 
This can be solved using UG-RADI by setting \(B_1 = -B\), \(B_2 = [\;]\), \(R_1 = -(D + D^\top)\), \(R_2 = [\;]\), \(C_1 = [\;]\), \(Z = [\;]\), and \(C_2 = C\). UG-RADI converged in 16 iterations in \(62.2808\) seconds. The decay in normalized residual is plotted in Figure \ref{fig4}.
\begin{figure}[!h]
  \centering
  \includegraphics[width=12cm]{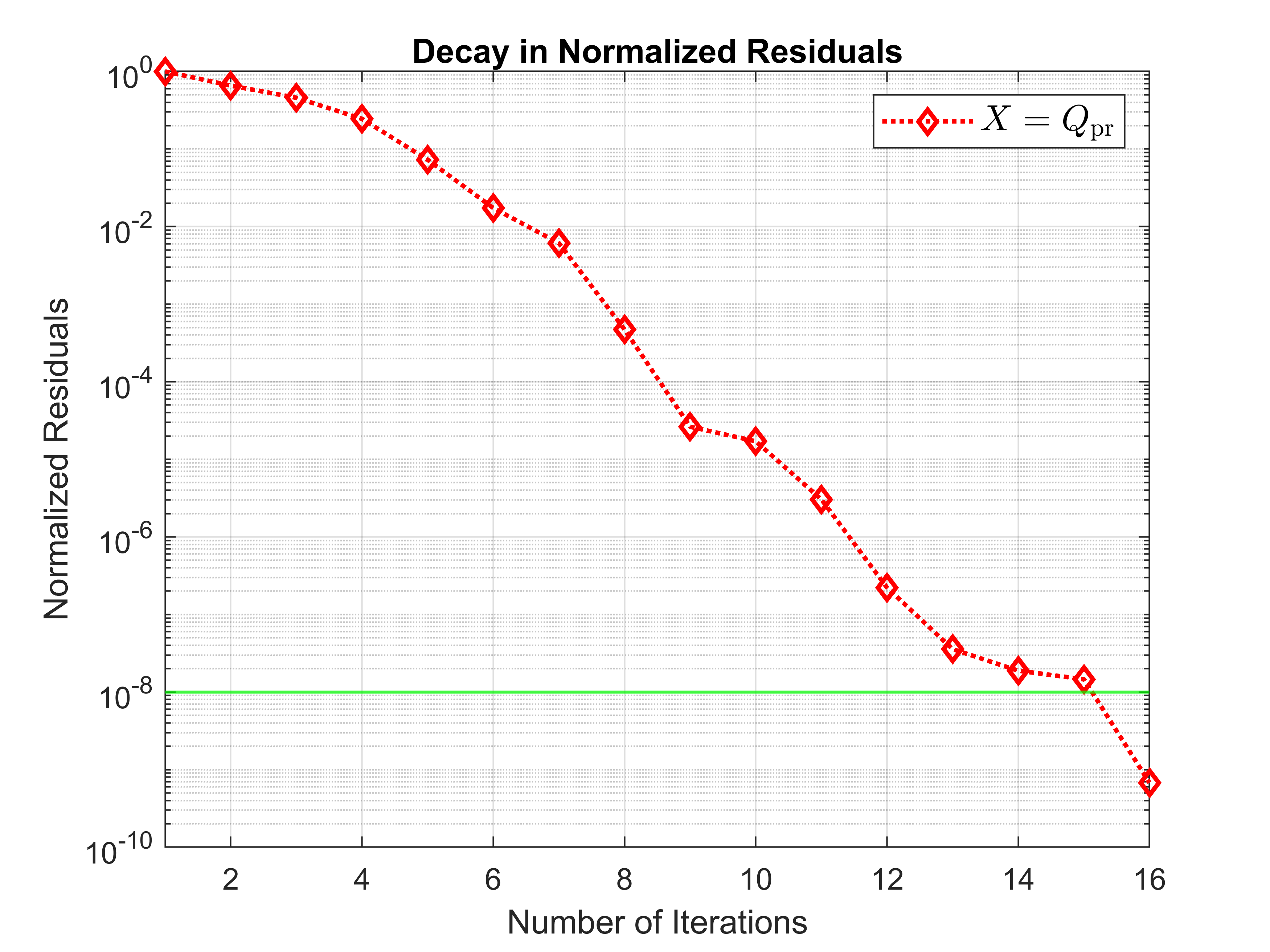}
  \caption{Normalized residual for \(Q_{\mathrm{pr}}\)}\label{fig4}
\end{figure}

Let \(Q_{\mathrm{br}}\) solve the following CARE
\[
A^\top Q_{\mathrm{br}}E + E^\top Q_{\mathrm{br}}A + C^\top C + (B^\top Q_{\mathrm{br}}E + D^\top C)^\top (I - D^\top D)^{-1} (B^\top Q_{\mathrm{br}}E + D^\top C) = 0.
\] 
This can be solved using UG-RADI by setting \(B_1 = B\), \(B_2 = [\;]\), \(R_1 = -(I - D^\top D)\), \(R_2 = [\;]\), \(C_1 = C\), \(Z = I\), and \(C_2 = D^\top C\). UG-RADI converged in 15 iterations in \(108.3384\) seconds. The decay in normalized residual is plotted in Figure \ref{fig5}.
\begin{figure}[!h]
  \centering
  \includegraphics[width=12cm]{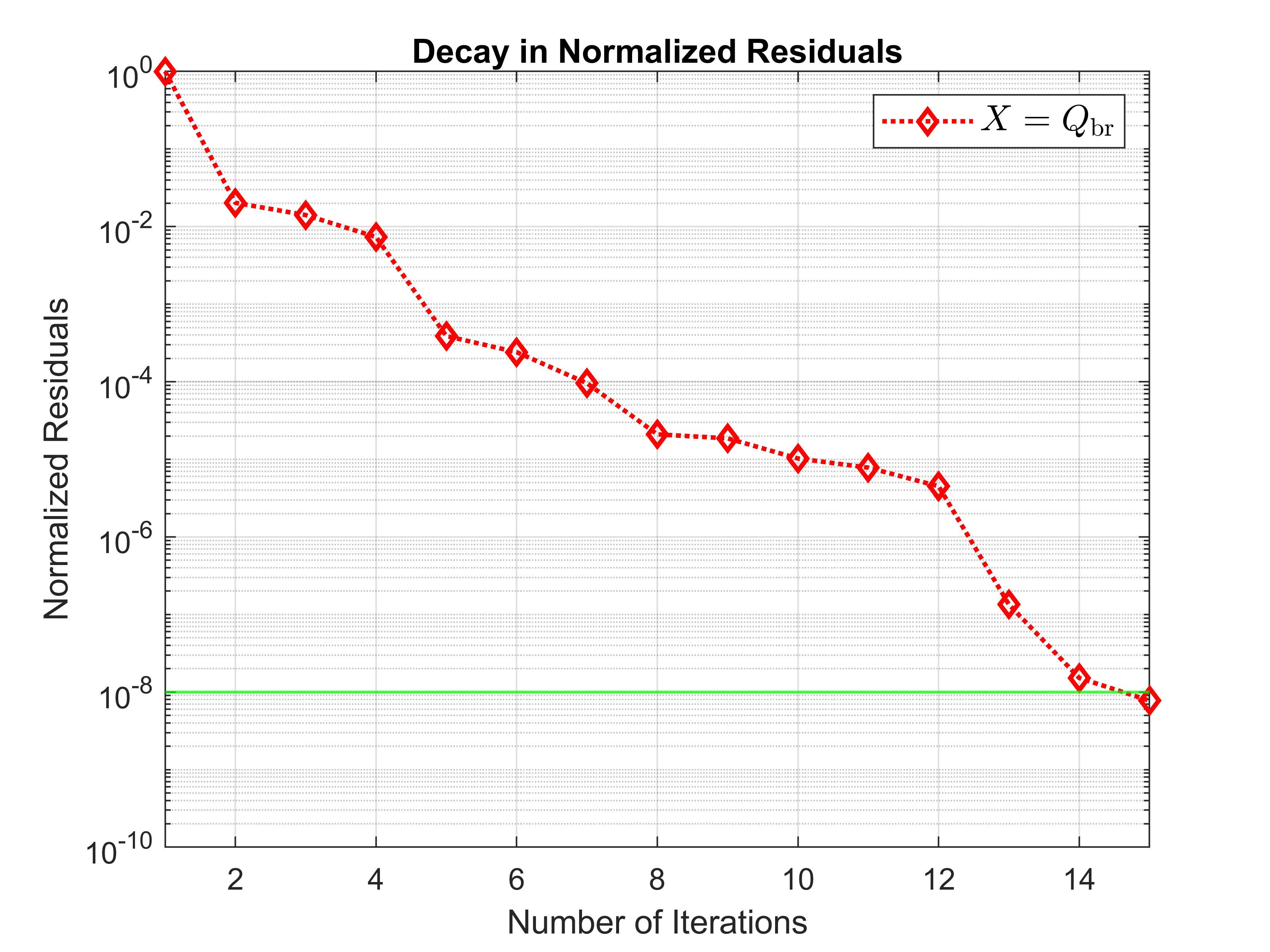}
  \caption{Normalized residual for \(Q_{\mathrm{br}}\)}\label{fig5}
\end{figure}

Let \(Q_{\mathrm{LQG}}\) solve the following CARE
\[
A^\top Q_{\mathrm{LQG}}E + E^\top Q_{\mathrm{LQG}}A - (B^\top Q_{\mathrm{LQG}}E + D^\top C)^\top (R + D^\top D)^{-1} (B^\top Q_{\mathrm{LQG}}E + D^\top C) + C^\top Q C = 0.
\] 
Set the weights \(Q\) and \(R\) as \(Q=0.2769\) and \(R=0.6557\). This can be solved using UG-RADI by setting \(B_1 = B\), \(B_2 = [\;]\), \(R_1 = R + D^\top D\), \(R_2 = [\;]\), \(C_1 = C\), \(Z = Q\), and \(C_2 = D^\top C\). UG-RADI converged in 16 iterations in \(297.7901\) seconds. The decay in normalized residual is plotted in Figure \ref{fig6}.
\begin{figure}[!h]
  \centering
  \includegraphics[width=12cm]{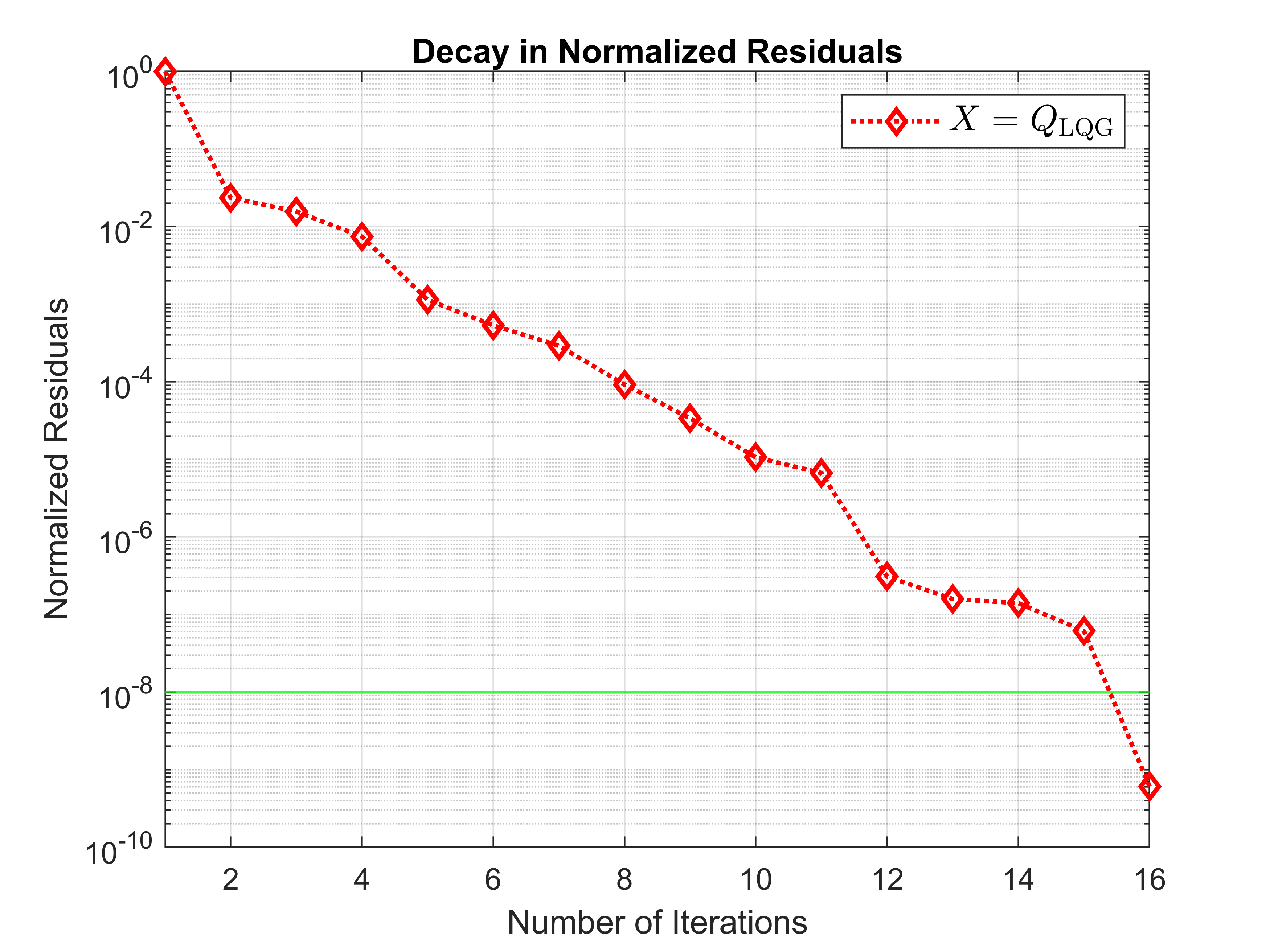}
  \caption{Normalized residual for \(Q_{\mathrm{LQG}}\)}\label{fig6}
\end{figure}

Let \(Q_\infty\) solve the following CARE
\[
A^\top Q_{\infty}E + E^\top Q_{\infty}A - E^\top Q_{\infty}\left( B R^{-1} B^\top - \frac{1}{\gamma^2} B B^\top \right) Q_{\infty} E + C^\top Q C = 0.
\] 
Set \(\gamma = 1.5\). This can be solved using G-RADI by setting \(B_1 = B\), \(B_2 = \sqrt{\frac{1}{\gamma^2}} B\), \(R_1 = R\), \(R_2 = I\), \(C_1 = C\), \(Z = Q\), and \(C_2 = [\;]\). UG-RADI converged in 16 iterations in \(82.2031\) seconds. The decay in normalized residual is plotted in Figure \ref{fig7}.

\begin{figure}[!h]
  \centering
  \includegraphics[width=12cm]{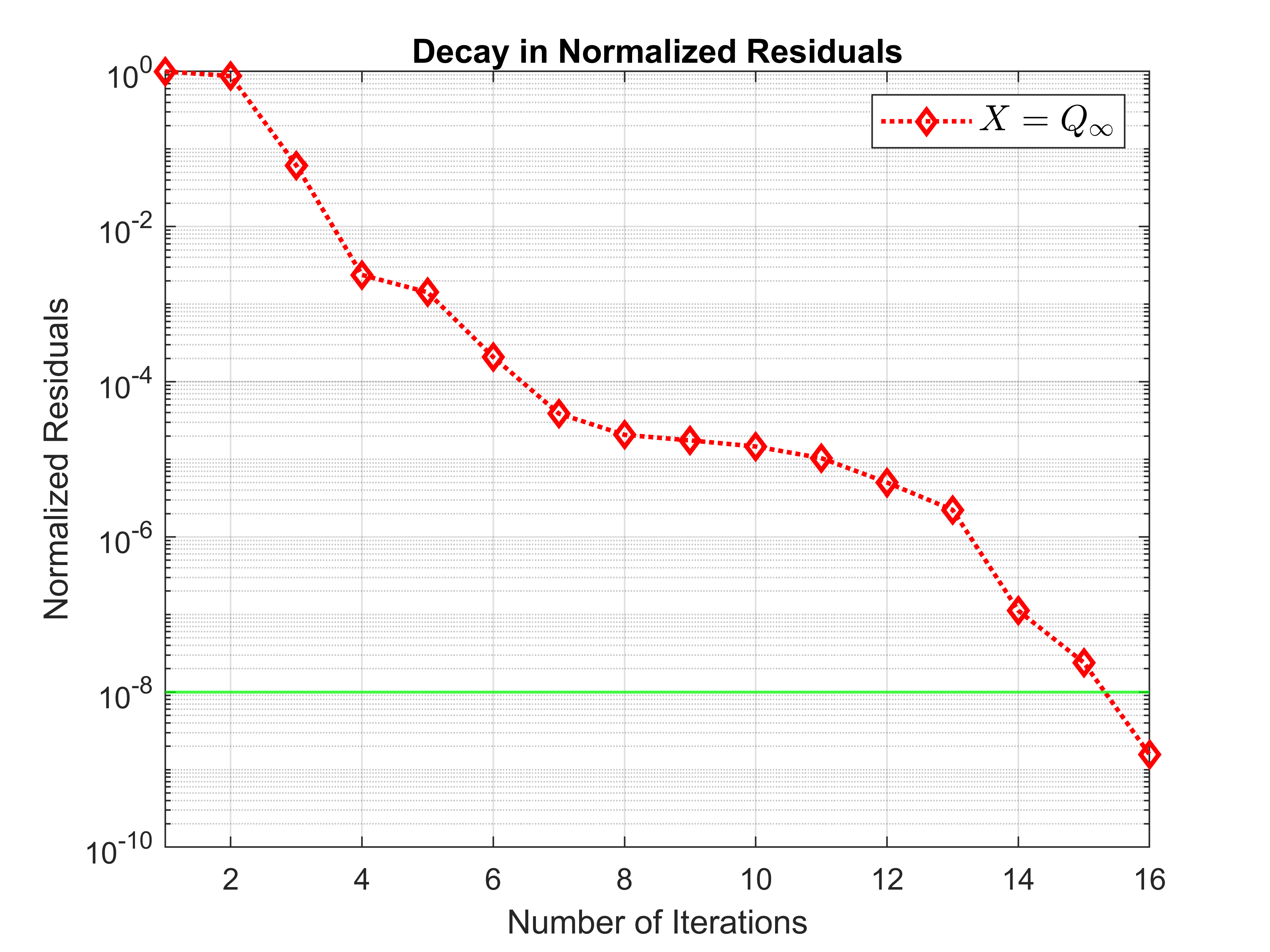}
  \caption{Normalized residual for \(Q_{\infty}\)}\label{fig7}
\end{figure}
\section{Conclusion}
A generalized low-rank ADI algorithm for large-scale continuous-time algebraic Riccati equations (CAREs) is proposed. The proposed algorithm can solve CAREs arising in several useful applications, including state estimation, controller design, and model order reduction. The algorithm is computationally efficient and does not require the explicit solution of any projected CARE. The low-rank solution of the large-scale CARE is accumulated recursively. An automatic shift generation strategy is also proposed, which generates ADI shifts without any user intervention. MATLAB-based implementations of the algorithm are presented. Numerical results demonstrate that the proposed algorithm converges rapidly when the subspace-accelerated shift generation strategy is employed, exhibiting a steep decline in the residual. The results also show that the proposed algorithm is capable of solving large-scale CAREs of order \(10^7\) in just a few minutes. Thus, G-RADI is an effective numerical tool for obtaining low-rank solutions of large-scale CAREs.
\section*{Appendix A: Proof of Theorem \ref{th1}}
\begin{proof}
1. Post-multiplying \eqref{radi_lyap} by \(\tilde{X}^{(i)}\) yields
\begin{align}
-(S_w^{(i)})^\top-(X^{(i)})^{-1}S_w^{(i)}\tilde{X}^{(i)}+(L_w^{(i)})^\top\hat{Z}\hat{C}_r^{(i)}+\hat{B}_r^{(i)}\hat{R}^{-1}(\hat{B}_r^{(i)})^\top \tilde{X}^{(i)}=0\nonumber\\
\hat{A}_r=-(X^{(i)})^{-1}S_w^{(i)}\tilde{X}^{(i)}+\hat{B}_r^{(i)}\hat{R}^{-1}(\hat{B}_r^{(i)})^\top \tilde{X}^{(i)}\nonumber\\
\hat{A}_{\mathrm{cl}}^{(i)}=-(X^{(i)})^{-1}S_w^{(i)}\tilde{X}^{(i)}.\nonumber
\end{align}
Hence, \(\hat{A}_{\mathrm{cl}}^{(i)}\) has eigenvalues \(\alpha_1,\dots,\alpha_k\), each with multiplicity \(p+m_1\).

Now consider
\begin{align}
&(\hat{A}_r^{(i)})^\top \tilde{X}^{(i)}+ \tilde{X}^{(i)} \hat{A}_r^{(i)}- \tilde{X}^{(i)}\hat{B}_r^{(i)}\hat{R}^{-1}(\hat{B}_r^{(i)})^\top\tilde{X}^{(i)}+(\hat{C}_r^{(i)})^\top \hat{Z}\hat{C}_r^{(i)}\nonumber\\
&=(\hat{A}_r^{(i)})^\top \tilde{X}^{(i)}-S_w^{(i)}\tilde{X}^{(i)}+(\hat{C}_r^{(i)})^\top \hat{Z}\hat{C}_r^{(i)}\nonumber\\
&=-\tilde{X}^{(i)}(S_w^{(i)})^\top+\tilde{X}^{(i)}\hat{B}_r^{(i)}\hat{R}^{-1}(\hat{B}_r^{(i)})^\top\tilde{X}^{(i)}-S_w^{(i)}\tilde{X}^{(i)}+\tilde{X}^{(i)}(L_w^{(i)})^\top \hat{Z}L_w^{(i)}\tilde{X}^{(i)}\nonumber\\
&=\tilde{X}^{(i)}\Big(-(S_w^{(i)})^\top (X^{(i)})^{-1}-(X^{(i)})^{-1}S_w^{(i)}+(L_w^{(i)})^\top\hat{Z}L_w^{(i)}+\hat{B}_r^{(i)}\hat{R}^{-1}(\hat{B}_r^{(i)})^\top\Big)\tilde{X}^{(i)}\nonumber\\
&=0.\nonumber
\end{align}
Thus, \(\tilde{X}^{(i)}\) is a stabilizing solution to the projected CARE \eqref{proj_ricc}, since all ADI shifts \(\alpha_i\) have negative real parts, making \(\hat{A}_{\mathrm{cl}}^{(i)}\) Hurwitz.

2. Recall that \(C_{\perp}^{(i)} = \hat{C} - L_w^{(i)} \tilde{X}^{(i)} (W^{(i)})^\top E\). Observe that
\begin{align}
\big(C_{\perp}^{(i)}\big)^\top\hat{Z}C_{\perp}^{(i)}&=\Big(\hat{C}^\top-E^\top W^{(i)}\tilde{X}^{(i)}(L_w^{(i)})^\top\Big)\hat{Z}\Big(\hat{C}-L_w^{(i)}\tilde{X}^{(i)}(W^{(i)})^\top E\Big)\nonumber\\
&=\hat{C}^\top\hat{Z}\hat{C}-\hat{C}^\top\hat{Z} L_w^{(i)}\tilde{X}^{(i)}(W^{(i)})^\top E-E^\top W^{(i)}\tilde{X}^{(i)}(L_w^{(i)})^\top\hat{Z}\hat{C}\nonumber\\
&\hspace*{2cm}+E^\top W^{(i)}\tilde{X}^{(i)}(L_w^{(i)})^\top\hat{Z}L_w^{(i)}\tilde{X}^{(i)}(W^{(i)})^\top E\nonumber
\end{align}
Therefore,
\begin{align}
\hat{C}^\top\hat{Z}\hat{C}=\big(C_{\perp}^{(i)}\big)^\top\hat{Z}C_{\perp}^{(i)}+\hat{C}^\top\hat{Z} L_w^{(i)}\tilde{X}^{(i)}(W^{(i)})^\top E+E^\top W^{(i)}\tilde{X}^{(i)}(L_w^{(i)})^\top\hat{Z}\hat{C}\nonumber\\
-E^\top W^{(i)}\tilde{X}^{(i)}(L_w^{(i)})^\top\hat{Z}L_w^{(i)}\tilde{X}^{(i)}(W^{(i)})^\top E\nonumber
\end{align}
Now consider the residual:
\begin{align}
R_s^{(i)}&=\hat{A}^\top W^{(i)}\tilde{X}^{(i)}(W^{(i)})^\top E+E^\top W^{(i)}\tilde{X}^{(i)}(W^{(i)})^\top \hat{A}\nonumber\\
&\hspace*{3cm}-E^\top W^{(i)}\tilde{X}^{(i)}(W^{(i)})^\top\hat{B}\hat{R}^{-1}\hat{B}^\top W^{(i)}\tilde{X}^{(i)}(W^{(i)})^\top E+\hat{C}^\top \hat{Z}\hat{C}\nonumber\\
&=\hat{A}^\top W^{(i)}\tilde{X}^{(i)}(W^{(i)})^\top E+E^\top W^{(i)}\tilde{X}^{(i)}(W^{(i)})^\top \hat{A}\nonumber\\
&\hspace*{1cm}-E^\top W^{(i)}\tilde{X}^{(i)}\hat{B}_r^{(i)}\hat{R}^{-1}(\hat{B}_r^{(i)})^\top\tilde{X}^{(i)}(W^{(i)})^\top E+\big(C_{\perp}^{(i)}\big)^\top\hat{Z}C_{\perp}^{(i)}+\hat{C}^\top\hat{Z} L_w^{(i)}\tilde{X}^{(i)}(W^{(i)})^\top E\nonumber\\
&\hspace*{3cm}+E^\top W^{(i)}\tilde{X}^{(i)}(L_w^{(i)})^\top\hat{Z}\hat{C}-E^\top W^{(i)}\tilde{X}^{(i)}(L_w^{(i)})^\top\hat{Z}L_w^{(i)}\tilde{X}^{(i)}(W^{(i)})^\top E\nonumber\\
&=\Big(\hat{A}^\top W^{(i)}+\hat{C}^\top\hat{Z} L_w^{(i)}\Big)\tilde{X}^{(i)}(W^{(i)})^\top E+E^\top W^{(i)}\tilde{X}^{(i)}\Big((W^{(i)})^\top \hat{A}+(L_w^{(i)})^\top\hat{Z}\hat{C}\Big)\nonumber\\
&\hspace*{1cm}-E^\top W^{(i)}\tilde{X}^{(i)}\Big((L_w^{(i)})^\top\hat{Z}L_w^{(i)}+\hat{B}_r^{(i)}\hat{R}^{-1}(\hat{B}_r^{(i)})^\top\Big)\tilde{X}^{(i)}(W^{(i)})^\top E+\big(C_{\perp}^{(i)}\big)^\top\hat{Z}C_{\perp}^{(i)}\nonumber\\
&=\Big(E^\top W^{(i)}S_w^{(i)}\Big)\tilde{X}^{(i)}(W^{(i)})^\top E+E^\top W^{(i)}\tilde{X}^{(i)}\Big((S_w^{(i)})^\top (W^{(i)})^\top E\Big)\nonumber\\
&\hspace*{1cm}-E^\top W^{(i)}\tilde{X}^{(i)}\Big((S_w^{(i)})^\top (X^{(i)})^{-1}+(X^{(i)})^{-1}S_w^{(i)}\Big)\tilde{X}^{(i)}(W^{(i)})^\top E+\big(C_{\perp}^{(i)}\big)^\top\hat{Z}C_{\perp}^{(i)}\nonumber\\
&=\big(C_{\perp}^{(i)}\big)^\top\hat{Z}C_{\perp}^{(i)}.\nonumber
\end{align}

3. Recall that
\[K_{\mathrm{gain}}=R_1^{-1}\big(B_1^\top XE+C_2\big).\]
Replacing \(X\) with its ADI-based approximation \(X \approx W^{(i)} \tilde{X}^{(i)} (W^{(i)})^\top\) gives
\[K_{\mathrm{gain}}\approx \tilde{K}_{\mathrm{gain}}^{(i)}=R_1^{-1}\big(B_1^\top W^{(i)}\tilde{X}^{(i)}(W^{(i)})^\top E+C_2\big).\]
Owing to the block diagonal structure of \(\tilde{X}^{(i)}\), we obtain
\begin{align}
\tilde{K}_{\mathrm{gain}}^{(i)}&=R_1^{-1}C_2+R_1^{-1}\big(B_1^\top W^{(i-1)}\tilde{X}^{(i-1)}(W^{(i-1)})^\top E\big)+R_1^{-1}\big(B_1^\top w_i\tilde{x}_iw_i^\top E\big)\nonumber\\
&=\tilde{K}_{\mathrm{gain}}^{(i-1)}+R_1^{-1}\big(B_1^\top w_i\tilde{x}_iw_i^\top E\big)\nonumber
\end{align}with $\tilde{K}_{\mathrm{gain}}^{(0)}=R_1^{-1}C_2$.

4. Finally, consider
\begin{align}
& \hat{A}^\top W^{(i)}-E^\top W^{(i)} (\hat{A}_r^{(i)})^\top+(C_{\perp}^{(i)})^\top \hat{Z}L_w^{(i)}\nonumber\\
&=\hat{A}^\top W^{(i)}-E^\top W^{(i)} \big(S_w^{(i)}-(\hat{C}_r^{(i)})^\top\hat{Z}L_w^{(i)}\big)+\big(\hat{C}-\hat{C}_r^{(i)}(W^{(i)})^\top E\big)^\top \hat{Z}L_w^{(i)}\nonumber\\
&=\hat{A}^\top W^{(i)}-E^\top W^{(i)}S_w^{(i)}+\hat{C}^\top\hat{Z}L_w^{(i)}\nonumber\\
&=0.\nonumber
\end{align}
This completes the proof.
\end{proof}
\section*{Appendix B: Proof of Theorem \ref{th2}}
\begin{proof}
We restrict to the case where the ADI shifts are all real; the complex case follows similarly.

To show $w_i = Z_{\mathrm{lyap}}^{(i)} t_{\mathrm{radi}}^{(i)}$, it suffices to verify
\begin{align}
\Big(\hat{A}^\top&-E^\top W^{(i-1)}\tilde{X}^{(i-1)}(W^{(i-1)})^\top\hat{B}\hat{R}^{-1}\hat{B}^\top+\alpha_iE^\top\Big)w_i\nonumber\\
&=(\hat{A}^\top-E^\top W^{(i-1)}\tilde{X}^{(i-1)}(W^{(i-1)})^\top\hat{B}\hat{R}^{-1}\hat{B}^\top+\alpha_iE^\top)Z_{\mathrm{lyap}}^{(i)}t_{\mathrm{radi}}^{(i)}\nonumber\\
&=\big(\hat{C}-L_w^{(i-1)}\tilde{X}^{(i-1)}(W^{(i-1)})^\top E\big)^\top\hat{Z}\nonumber\\
&=\big(\hat{C}^\top \hat{Z}-E^\top W^{(i-1)}\tilde{X}^{(i-1)}(L_w^{(i-1)})^\top \hat{Z}\nonumber\\
&=(C_{\perp}^{(i-1)})^\top\hat{Z}.\nonumber
\end{align}
First, note that
\begin{align}
\begin{bmatrix}a_{11}&a_{12}\\a_{21}&a_{22}\end{bmatrix}\begin{bmatrix}t_1^{(i)}\\t_2^{(i)}\end{bmatrix}=\begin{bmatrix}(L_{\mathrm{lyap}}^{(i-1)})^\top\hat{Z}-T_{\mathrm{radi}}^{(i-1)}\tilde{X}^{(i-1)}(L_w^{(i-1)})^\top\hat{Z}\\-\gamma_i\hat{Z}\end{bmatrix}.\nonumber
\end{align}
\begin{align}
a_{11}&=-(S_{\mathrm{lyap}}^{(i-1)})^\top+(L_{\mathrm{lyap}}^{(i-1)})^\top\hat{Z}(\bar{B}_{\mathrm{lyap}}^{(i-1)})^\top-T_{\mathrm{radi}}^{(i-1)}\tilde{X}^{(i-1)}\hat{B}_r^{(i-1)}\hat{R}^{-1}(\hat{B}_{\mathrm{lyap}}^{(i-1)})^\top+\alpha_iI,\nonumber\\
a_{12}&=(L_{\mathrm{lyap}}^{(i-1)})^\top\hat{Z}\tilde{B}^\top z_i-T_{\mathrm{radi}}^{(i-1)}\tilde{X}^{(i-1)}\hat{B}_r^{(i-1)}\hat{R}^{-1}\hat{B}^\top z_i,\nonumber\\
a_{21}&=\gamma_i\big(L_{\mathrm{lyap}}^{(i-1)}-\hat{Z}(\bar{B}_{\mathrm{lyap}}^{(i-1)})^\top\big) ,\nonumber\\
a_{22}&=-\gamma_i^2 I-\gamma_i\hat{Z}\tilde{B}^\top z_i.\nonumber
\end{align}
Thus 
\begin{align}
a_{11}t_1^{(i)}+a_{12}t_2^{(i)}&=(L_{\mathrm{lyap}}^{(i-1)})^\top\hat{Z}-T_{\mathrm{radi}}^{(i-1)}\tilde{X}^{(i-1)}(L_w^{(i-1)})^\top\hat{Z},\label{obsv1}\\
a_{21}t_1^{(i)}+a_{22}t_2^{(i)}&=-\gamma_i\hat{Z}.
\end{align}
From \eqref{V_lyap_sylv2},
\begin{align}
A^\top Z_{\mathrm{lyap}}^{(i-1)}=-E^\top Z_{\mathrm{lyap}}^{(i-1)} (S_{\mathrm{lyap}}^{(i-1)})^\top-(\mathcal{C}_{\perp}^{(i-1)})^\top L_{\mathrm{lyap}}^{(i-1)}.\label{obsv2}
\end{align}
Also,
\begin{align}
(A^\top+\alpha_i E^\top)v_i&=\frac{1}{\gamma_i}(A^\top+\alpha_i E^\top)z_i=(\mathcal{C}_{\perp}^{(i-1)})^\top,\label{obsv3}\\
\mathcal{C}_{\perp}^{(i-1)}&=\hat{C}-L_{\mathrm{lyap}}^{(i-1)}(Z_{\mathrm{lyap}}^{(i-1)})^\top E.\label{obsv4}
\end{align}
Now consider
\begin{align}
\mathrm{LHS}&=\Big(\hat{A}^\top-E^\top Z_{\mathrm{lyap}}^{(i-1)}T_{\mathrm{radi}}^{(i-1)}\tilde{X}^{(i-1)}(W^{(i-1)})^\top\hat{B}\hat{R}^{-1}\hat{B}^\top+\alpha_iE^\top\Big)Z_{\mathrm{lyap}}^{(i)}t_{\mathrm{radi}}^{(i)}\nonumber\\
&=\Big(A^\top+\hat{C}^\top\hat{Z}\tilde{B}^\top-E^\top Z_{\mathrm{lyap}}^{(i-1)}T_{\mathrm{radi}}^{(i-1)}\tilde{X}^{(i-1)}\hat{B}_r^{(i-1)}\hat{R}^{-1}\hat{B}^\top+\alpha_iE^\top\Big)\Big(Z_{\mathrm{lyap}}^{(i-1)}t_1^{(i)}+z_it_{2}^{(i)}\Big)\nonumber\\
&=A^\top Z_{\mathrm{lyap}}^{(i-1)}t_1^{(i)}-E^\top Z_{\mathrm{lyap}}^{(i-1)}T_{\mathrm{radi}}^{(i-1)}\tilde{X}^{(i-1)}\hat{B}_r^{(i-1)}\hat{R}^{-1}\hat{B}^\top Z_{\mathrm{lyap}}^{(i-1)}t_1^{(i)}+\alpha_iE^\top Z_{\mathrm{lyap}}^{(i-1)}t_1^{(i)}\nonumber\\
&+(A^\top+\alpha_iE^\top ) z_it_{2}^{(i)}-E^\top Z_{\mathrm{lyap}}^{(i-1)}T_{\mathrm{radi}}^{(i-1)}\tilde{X}^{(i-1)}\hat{B}_r^{(i-1)}\hat{R}^{-1}\hat{B}^\top z_it_{2}^{(i)}+\hat{C}^\top\hat{Z}\tilde{B}^\top Z_{\mathrm{lyap}}^{(i-1)}t_1^{(i)}\nonumber\\
&+\hat{C}^\top\hat{Z}\tilde{B}^\top z_it_{2}^{(i)}\nonumber
\end{align}
Substitute \eqref{obsv1}-\eqref{obsv4} into LHS:
\begin{align}
\mathrm{LHS}&=E^\top Z_{\mathrm{lyap}}^{(i-1)}\Big(-(S_{\mathrm{lyap}}^{(i-1)})^\top +\alpha_i I-T_{\mathrm{radi}}^{(i-1)}\tilde{X}^{(i-1)}\hat{B}_r^{(i-1)}\hat{R}^{-1}(\hat{B}_{\mathrm{lyap}}^{(i-1)})^\top \Big)t_1^{(i)}+(\mathcal{C}_{\perp}^{(i-1)})^\top\Big(\gamma_it_{2}^{(i)}\nonumber\\ &-L_{\mathrm{lyap}}^{(i-1)}t_1^{(i)}\Big)-E^\top Z_{\mathrm{lyap}}^{(i-1)}T_{\mathrm{radi}}^{(i-1)}\tilde{X}^{(i-1)}\hat{B}_r^{(i-1)}\hat{R}^{-1}\hat{B}^\top z_it_{2}^{(i)}+\hat{C}^\top\hat{Z}\tilde{B}^\top Z_{\mathrm{lyap}}^{(i-1)}t_1^{(i)}+\hat{C}^\top\hat{Z}\tilde{B}^\top z_it_{2}^{(i)}\nonumber\\
&=E^\top Z_{\mathrm{lyap}}^{(i-1)}\Big(a_{11}t_1^{(i)}+a_{12}t_{2}^{(i)}-(L_{\mathrm{lyap}}^{(i-1)})^\top\hat{Z}(\bar{B}_{\mathrm{lyap}}^{(i-1)})^\top t_1^{(i)}-(L_{\mathrm{lyap}}^{(i-1)})^\top\hat{Z}\tilde{B}^\top z_it_{2}^{(i)}\Big)\nonumber\\
&+(\mathcal{C}_{\perp}^{(i-1)})^\top\big(\hat{Z}-\hat{Z}(\bar{B}_{\mathrm{lyap}}^{(i-1)})^\top t_1^{(i)}-\hat{Z}\tilde{B}^\top z_it_2^{(i)}\big)+\hat{C}^\top\hat{Z}\tilde{B}^\top Z_{\mathrm{lyap}}^{(i-1)}t_1^{(i)}+\hat{C}^\top\hat{Z}\tilde{B}^\top z_it_{2}^{(i)}\nonumber\\
&=E^\top Z_{\mathrm{lyap}}^{(i-1)}\Big((L_{\mathrm{lyap}}^{(i-1)})^\top\hat{Z}-T_{\mathrm{radi}}^{(i-1)}\tilde{X}^{(i-1)}(L_w^{(i-1)})^\top\hat{Z}-(L_{\mathrm{lyap}}^{(i-1)})^\top\hat{Z}(\bar{B}_{\mathrm{lyap}}^{(i-1)})^\top t_1^{(i)}-(L_{\mathrm{lyap}}^{(i-1)})^\top\hat{Z}\tilde{B}^\top z_it_{2}^{(i)}\Big)\nonumber\\
&+(\mathcal{C}_{\perp}^{(i-1)})^\top\big(\hat{Z}-\hat{Z}(\bar{B}_{\mathrm{lyap}}^{(i-1)})^\top t_1^{(i)}-\hat{Z}\tilde{B}^\top z_it_2^{(i)}\big)+\hat{C}^\top\hat{Z}\tilde{B}^\top Z_{\mathrm{lyap}}^{(i-1)}t_1^{(i)}+\hat{C}^\top\hat{Z}\tilde{B}^\top z_it_{2}^{(i)}\nonumber\\
&=E^\top Z_{\mathrm{lyap}}^{(i-1)}\Big((L_{\mathrm{lyap}}^{(i-1)})^\top\hat{Z}-T_{\mathrm{radi}}^{(i-1)}\tilde{X}^{(i-1)}(L_w^{(i-1)})^\top\hat{Z}-(L_{\mathrm{lyap}}^{(i-1)})^\top\hat{Z}(\bar{B}_{\mathrm{lyap}}^{(i-1)})^\top t_1^{(i)}-(L_{\mathrm{lyap}}^{(i-1)})^\top\hat{Z}\tilde{B}^\top z_it_{2}^{(i)}\Big)\nonumber\\
&+\hat{C}^\top\hat{Z}-\hat{C}^\top\hat{Z}(\bar{B}_{\mathrm{lyap}}^{(i-1)})^\top t_1^{(i)}-\hat{C}^\top\hat{Z}\tilde{B}^\top z_it_2^{(i)}-E^\top Z_{\mathrm{lyap}}^{(i-1)}\Big((L_{\mathrm{lyap}}^{(i-1)})^\top\hat{Z}-(L_{\mathrm{lyap}}^{(i-1)})^\top\hat{Z}(\bar{B}_{\mathrm{lyap}}^{(i-1)})^\top t_1^{(i)}\nonumber\\
&-(L_{\mathrm{lyap}}^{(i-1)})^\top\hat{Z}\tilde{B}^\top z_it_2^{(i)}\Big)+\hat{C}^\top\hat{Z}\tilde{B}^\top Z_{\mathrm{lyap}}^{(i-1)}t_1^{(i)}+\hat{C}^\top\hat{Z}\tilde{B}^\top z_it_{2}^{(i)}\nonumber\\
&=\hat{C}^\top\hat{Z}-E^\top Z_{\mathrm{lyap}}^{(i-1)}T_{\mathrm{radi}}^{(i-1)}\tilde{X}^{(i-1)}(L_w^{(i-1)})^\top\hat{Z}.\nonumber
\end{align}
For $i=1$, $Z_{\mathrm{lyap}}^{(i-1)}=[\;]$, $T_{\mathrm{radi}}^{(i-1)}=[\;]$, $\tilde{X}^{(i-1)}=[\;]$, so
\begin{align}
\mathrm{LHS}&=(\hat{A}^\top+\alpha_1 E^\top)Z_{\mathrm{lyap}}^{(1)}t_{\mathrm{radi}}^{(1)}\nonumber\\
&=(A^\top+\hat{C}^\top\hat{Z}\tilde{B}^\top+\alpha_1 E^\top)z_1t_{\mathrm{radi}}^{(1)}\nonumber\\
&=\Big((A^\top+\alpha_1 E^\top)z_1+\hat{C}^\top\hat{Z}(\bar{B}_{\mathrm{lyap}}^{(1)})^\top\Big)t_{\mathrm{radi}}^{(1)}\nonumber\\
&=\big(\gamma_1\hat{C}^\top+\hat{C}^\top\hat{Z}(\bar{B}_{\mathrm{lyap}}^{(1)})^\top\big)t_{\mathrm{radi}}^{(1)}\nonumber\\
&=\hat{C}^\top\big(\gamma_1I+\hat{Z}(\bar{B}_{\mathrm{lyap}}^{(1)})^\top \big)t_{\mathrm{radi}}^{(1)}\nonumber
\end{align}
Substitute $t_{\mathrm{radi}}^{(1)}=\big(2\alpha_1 I-\gamma_1\hat{Z}\bar{B}_{\mathrm{lyap}}^{(1)}\big)^{-1}(-\gamma_1\hat{Z})=\big(\gamma_1I+\hat{Z}\bar{B}_{\mathrm{lyap}}^{(1)}\big)^{-1}\hat{Z}$ from \eqref{ti}. Then
\begin{align}
\mathrm{LHS}=\hat{C}^\top\hat{Z}=(\hat{A}^\top+\alpha_1E^\top)w_1.\nonumber
\end{align}
Hence $(\hat{A}^\top+\alpha_1E^\top)w_1=(\hat{A}^\top+\alpha_1 E^\top)Z_{\mathrm{lyap}}^{(1)}t_{\mathrm{radi}}^{(1)}$ and $w_1=Z_{\mathrm{lyap}}^{(1)}t_{\mathrm{radi}}^{(1)}$.

For $i=2$,
\begin{align}
&\Big(\hat{A}^\top-E^\top W^{(1)}\tilde{X}^{(1)}(W^{(1)})^\top\hat{B}\hat{R}^{-1}\hat{B}^\top+\alpha_iE^\top\Big)Z_{\mathrm{lyap}}^{(2)}t_{\mathrm{radi}}^{(2)}\\
&=\hat{C}^\top\hat{Z}-E^\top W^{(1)}\tilde{X}^{(1)}(L_w^{(1)})^\top\hat{Z}\nonumber\\
&=(C_{\perp}^{(1)})^\top \hat{Z}\nonumber\\
&=\Big(\hat{A}^\top-E^\top W^{(1)}\tilde{X}^{(1)}(W^{(1)})^\top\hat{B}\hat{R}^{-1}\hat{B}^\top+\alpha_iE^\top\Big)w_2.\nonumber
\end{align}
The recursion for $i=3,4,\dots$ follows similarly.

Now note:
\begin{align}
Z_{\mathrm{lyap}}^{(i)}T_{\mathrm{radi}}^{(i)}=\begin{bmatrix}Z_{\mathrm{lyap}}^{(i-1)}&z_i\end{bmatrix}\begin{bmatrix}T_{\mathrm{radi}}^{(i-1)}&t_1^{(i)}\\0&t_2^{(i)}\end{bmatrix}
=\begin{bmatrix}Z_{\mathrm{lyap}}^{(i-1)}T_{\mathrm{radi}}^{(i-1)}&Z_{\mathrm{lyap}}^{(i)}t_{\mathrm{radi}}^{(i)}\end{bmatrix}.\nonumber
\end{align}
Since $Z_{\mathrm{lyap}}^{(1)}T_{\mathrm{radi}}^{(1)}=W^{(1)}$, continuing the recursion for $i=2,3,\dots$ gives $W^{(i)}=Z_{\mathrm{lyap}}^{(i)}T_{\mathrm{radi}}^{(i)}$. This completes the proof.

\end{proof}
\section*{Appendix C: MATLAB-based Implementation of G-RADI Using SMW Formula}
\begin{verbatim}
function [W,Xinv,K_gain,Res,a_used,Sw,Lw,X] = G_RADI(E,A,B1,B2,R1,R2,C1,C2,Z,...
    a,a_in,tol,kmax,rank_max,flag_s)

% Generalized Low-rank ADI Algorithm for Large-scale Continuous-time
% Algebraic Riccati Equations (CARE)
% It solves the CARE 
% A' Q E + E' Q A + E' Q B2 inv(R2) B2' Q E- (E' Q B1 + C2') inv(R1)
% (B1' Q E + C2) + C1' Z C1 =0 
% Q \approx W inv(Xinv) W'
% K_gain \approx inv(R1) (B1' Q E + C2)
% Author: Umair Zulfiqar
% Date: 13th April 2026

%% 
if any(real(a) >= -1e-8) || any(real(a_in) >= -1e-8)
    disp('Error: All the ADI shifts must have negative real parts')
    disp('Fix it and try again!')
    return
end


%% Initialization
m1 = size(B1,2); m2=size(B2,2); p1 = size(C1,1); p2 = size(C2,1);
Ip = eye(p1+p2); n=size(A,1);
if isempty(R1)
    R1=eye(m1);
end
if isempty(R2)
    R2=eye(m2);
end
if isempty(Z)
    Z=eye(p1);
end
if isempty(C2)
    Ch=C1; Zh = Z;
end
if isempty(C1)
    Ch=C2; Zh =-inv(R1);
end
if ~isempty(C1) && ~isempty(C2)
    Ch = [C1; C2];
    Zh = blkdiag(Z,-inv(R1)); 
end
if isempty(B1)
    Bh=B2; R1=[]; Rh =-R2;
end
if isempty(B2)
    Bh=B1; R2=[]; Rh = R1;
end
if ~isempty(B1) && ~isempty(B2)
    Bh = [B1 B2]; Rh = blkdiag(R1,-R2);
end
if ~isempty(a)
    kmax = length(a);
else
    a=a_in;
end
max_cols = (kmax+2) * (p1 + p2);
W = zeros(n, max_cols);
W_proj=[];
Xinv = zeros(max_cols, max_cols);
if flag_s
    X = zeros(max_cols, max_cols);
else
    X=[];
end
C_ = Ch; K_gain=0;
col_idx = 1;
k = 1; r_idx = 1;
Sw=[]; Lw=[];
flag=1; res=1; Res=[]; 
%% Iterations

while flag

    Shift_No=k
    
    if k>kmax
        disp('Maximum iteration Reached')
        flag=0;
        break
    end

    if res<tol
        disp('G-RADI Converged')
        flag=0;
        break
    end
   
    ak = a(k);
    
    if k == 1

        wk = smw_solve_1(A, B1, R1, C1, C2, E, ak);
        wk = wk * Zh;

    else
       
        curr_cols = col_idx - 1;
        W_curr = W(:, 1:curr_cols);
        Xinv_curr = Xinv(1:curr_cols, 1:curr_cols);
        if flag_s
            X_curr = X(1:curr_cols, 1:curr_cols);
        end

        if isempty(B1) || isempty(C2)
            wk = smw_solve_2(A, E, W_curr, Xinv_curr, Bh, Rh, C_, ak);
        else
            wk = smw_solve_3(A, E, W_curr, Xinv_curr, Bh, Rh, C_, ak, C2, R1, B1);
        end
        wk = wk * Zh;

    end
    
    if isreal(ak)
        
        if flag_s
            sw=-ak*Ip; lw=-Ip;
            if k==1
                Sw=blkdiag(Sw,sw); Lw=[Lw lw];
            else
                Bhr_curr=W_curr'*Bh; bhr=wk'*Bh;
                Sw=[Sw X_curr*(Lw'*Zh*lw+Bhr_curr*inv(Rh)*bhr');
                    zeros(size(sw,1),size(Sw,2)) sw];
                Lw=[Lw lw];
            end
        end

        Bhk=wk'*Bh;
        x_inv = -(Zh + Bhk*inv(Rh)*Bhk') / (2*ak); x=inv(x_inv);
        K_gain= K_gain+inv(R1)*((B1'*wk)*x*(wk'*E));
        block_size = p1 + p2;
        idx_range = col_idx : col_idx + block_size - 1;
        W(:, idx_range) = wk;
        Xinv(idx_range, idx_range) = x_inv;
        if flag_s
            X(idx_range, idx_range) = x;
        end
        C_ = C_ + x * wk' * E;
        res=max(sqrt(eig(full(Zh*C_*C_'*Zh*C_*C_'))))/...
            max(sqrt(eig(full(Zh*Ch*Ch'*Zh*Ch*Ch'))));
        Residual=res
        Res=[Res res];
        col_idx = col_idx + block_size;
        k = k + 1;
        
    else

        ar = real(ak); ai = imag(ak);
        wr = real(wk); wi = imag(wk);
        if flag_s
            sw=kron(-[ar ai; -ai ar],Ip); lw=kron([-1 0],Ip);
            if k==1
                Sw=blkdiag(Sw,sw); Lw=[Lw lw];
            else
                Bhr_curr=W_curr'*Bh; bhr=[wr wi]'*Bh;
                Sw=[Sw X_curr*(Lw'*Zh*lw+Bhr_curr*inv(Rh)*bhr');
                    zeros(size(sw,1),size(Sw,2)) sw];
                Lw=[Lw lw];
            end
        end

        x_inv = compute_x_inv(ar, ai, wr, wi, Zh, Bh,Rh); x=inv(x_inv);
        K_gain= K_gain+inv(R1)*((B1'*[wr, wi])*x*([wr, wi]'*E));
        block_size = 2 * (p1 + p2);
        idx_range = col_idx : col_idx + block_size - 1;
        W(:, idx_range) = [wr, wi];
        Xinv(idx_range, idx_range) = x_inv;
        if flag_s
            X(idx_range, idx_range) = x;
        end
        C_ = C_ + [Ip, zeros(p1+p2, p1+p2)]*x * [wr, wi]' * E;
        res=max(sqrt(eig(full(Zh*C_*C_'*Zh*C_*C_'))))/...
            max(sqrt(eig(full(Zh*Ch*Ch'*Zh*Ch*Ch'))));
        Residual=res
        Res=[Res res];
        col_idx = col_idx + block_size;
        k = k + 2;
    end
    
    if k>=length(a)

        if size(W_proj,2) >= rank_max
            r_idx=1;
        end
        actual_cols = col_idx - 1;
        last_cols = actual_cols-r_idx*(p1+p2);
        W_proj=W(:,last_cols+1:actual_cols);
        [wp,~]=qr(W_proj,0);
        er=wp'*E*wp; ar=wp'*A*wp; cr=C_*wp;
        a_new = eig_sort(ar/er,(cr/er)',cr/er);
        a_new=(-abs(real(a_new))+sqrt(-1).*imag(a_new)).';
        if abs(imag(a_new(1))) <= 1e-8
            a_new=real(a_new(1));
        else
            a_new=[a_new(1) conj(a_new(1))];
        end
        a=[a a_new]; r_idx=r_idx+1;

    end


end

%% Trimming
actual_cols = col_idx - 1;
W = W(:, 1:actual_cols);
Xinv = Xinv(1:actual_cols, 1:actual_cols);
if flag_s
    X = X(1:actual_cols, 1:actual_cols);
end
a_used=a(1:(actual_cols)/(p1+p2));
if ~isempty(C2)
    K_gain=K_gain+inv(R1)*C2;
end


%% Helper Functions

% Function #1
function wk = smw_solve_1(A, B1, R, C1, C2, E, ak)

    M0 = A' + ak * E';
    RHS = [C1; C2]';
    
    if isempty(B1) || isempty(C2)
        wk = M0 \ RHS;
    else
        U = C2';
        V = B1';
        C_inv = -R;
        
        Sol = M0 \ [RHS, U];
        p_m = size(RHS, 2);
        S = Sol(:, 1:p_m);
        Y = Sol(:, p_m+1:end);
        
        K = C_inv + V * Y;
        Lambda = K \ (V * S);
        wk = S - Y * Lambda;
    end
    
end

% Function #2

function wk = smw_solve_2(A, E, W, P, Bh, Rh, C_, ak)
    
    M = A.' + ak * E.';
    Y0 = M \ (C_');
    U = E' * (W / P);
    Z1 = M \ U;
    S = W' * (Bh * (Rh \ (Bh' * Z1)));
    T = (eye(size(S)) - S) \ eye(size(S));
    VY = W' * (Bh * (Rh \ (Bh' * Y0)));
    wk = Y0 + Z1 * (T * VY);
    
end

% Function #3

function wk = smw_solve_3(A, E, W, P, Bh, Rh, C_, ak, C2, R1, B1)
    
    M0 = A' + ak * E';
    Y0 = M0 \ (C_');
    U1 = -((C2') / R1);         
    U2 = -(E' * (W / P));
    U = [U1, U2];
    Z1 = M0 \ U;                
    Z_Bh = Bh' * Z1;
    Z_Rh_inv = Rh\Z_Bh;
    Z_term = Bh * Z_Rh_inv;
    V2Z = W' * Z_term;
    V1Z = B1' * Z1;
    S = eye(size(U,2)) + [V1Z; V2Z];
    G = S \ eye(size(S)); 
    Y0_Bh = Bh' * Y0;      
    Y0_Rh_inv = Rh \ Y0_Bh;
    Y0_term = Bh * Y0_Rh_inv;
    V2Y0 = W' * Y0_term;
    V1Y0 = B1' * Y0;
    VY = [V1Y0; V2Y0];

    wk = Y0 - Z1 * (G * VY);

end

% Function #4
   
    function [x_inv] = compute_x_inv(ar, ai, wr, wi, Zh, Bh,Rh)

    den = 4 * ar * (ar^2 + ai^2);
    Bhr=wr'*Bh; Bhi=wi'*Bh;
    q11 = Bhr*inv(Rh)*Bhr';
    q22 = Bhi*inv(Rh)*Bhi';
    q12 = Bhr*inv(Rh)*Bhi';
    
    c1 = 2*ar^2 + ai^2; c2 = ai^2; c3 = ar*ai;

    p11  = -(1/den) * ( c1*(Zh + q11) + c2*q22 + c3*(q12 + q12') );
    p12 =  (1/den) * ( c3*(Zh + q11 - q22) - c1*q12 + c2*q12');
    p22  =  (1/den) * (-c2*(Zh + q11)- c1*q22 + c3*(q12 + q12') );
    x_inv=[p11 p12; p12' p22];
    
end

% Function #5

function [sig,ev] = eig_sort(A,B,C)

A=full(A); B=full(B); C=full(C);
[ve,se] = eig(A); se = diag(se); weT = inv(ve);
re=length(se); quan = zeros(re,1);

for ke = 1:re  
     if  (norm(C*ve(:,ke)) ~= 0)  &&  (norm(weT(ke,:)*B) ~= 0)
         quan(ke) = norm(C*ve(:,ke))*norm(weT(ke,:)*B)/abs(real(se(ke)));
     else
         quan(ke) = 0;
     end
end
[~,inds] = sort(quan,'descend'); sig = ( se(inds) ).'; ev = ve(:,inds);

end

end
\end{verbatim}
\section*{Appendix D: MATLAB-based Implementation of UG-RADI}
\begin{verbatim}
function [V,T,X,K,Res,a_used,Sv,Lv] = UG_RADI(E,A,B1,B2,R1,R2,C1,C2,Z,...
                                                  a,a_in,tol,kmax,rank_max)

% Unified Generalized Low-rank ADI Algorithm for Large-scale Continuous-time
% Algebraic Riccati Equations (CARE)
% It solves the CARE 
% A' Q E + E' Q A + E' Q B2 inv(R2) B2' Q E- (E' Q B1 + C2') inv(R1)
% (B1' Q E + C2) + C1' Z C1 =0 
% Q \approx V T X T' V'
% K \approx inv(R1) (B1' Q E + C2)
% Author: Umair Zulfiqar
% Date: 13th April 2026

%% 
if any(real(a) >= -1e-8) || any(real(a_in) >= -1e-8)
    disp('Error: All the ADI shifts must have negative real parts')
    disp('Fix it and try again!')
    return
end


%% Initialization
m1 = size(B1,2); m2=size(B2,2); p1 = size(C1,1); p2 = size(C2,1);
Ip = eye(p1+p2); n=size(A,1);
if isempty(R1)
    R1=eye(m1);
end
if isempty(R2)
    R2=eye(m2);
end
if isempty(Z)
    Z=eye(p1);
end
if isempty(C2)
    Ch=C1; Zh = Z;
end
if isempty(C1)
    Ch=C2; Zh =-inv(R1);
end
if ~isempty(C1) && ~isempty(C2)
    Ch = [C1; C2];
    Zh = blkdiag(Z,-inv(R1)); 
end
if isempty(B1)
    Bh=B2; R1=[]; Rh =-R2;
end
if isempty(B2)
    Bh=B1; R2=[]; Rh = R1;
end
if ~isempty(B1) && ~isempty(B2)
    Bh = [B1 B2]; Rh = blkdiag(R1,-R2);
end
if ~isempty(a)
    kmax = length(a);
else
    a=a_in;
end
max_cols = (kmax+2) * (p1 + p2);
V = zeros(n, max_cols);
W_proj=[];
X = zeros(max_cols, max_cols);
T = zeros(max_cols, max_cols);
Sv = zeros(max_cols, max_cols);
Lv = zeros(p1+p2,max_cols);
Lw = zeros(p1+p2,max_cols);
C_ = Ch; Cv_ = Ch; K=0;
col_idx = 1;
k = 1; r_idx = 1;
Br_lyap =zeros(max_cols,m1+m2); Br_ricc=zeros(max_cols,m1+m2);
flag=1; res=1; Res=[]; 
%% Iterations

while flag

    Shift_No=k
    
    if k>kmax
        disp('Maximum iteration Reached')
        flag=0;
        break
    end

    if res<tol
        disp('UG-RADI Converged')
        flag=0;
        break
    end
   
    ak = a(k);
    gm=sqrt(-2*real(ak));

    vk=(A'+ak*E')\(Cv_');
    
    if isreal(ak)

        sv=-ak*Ip; lv=-gm*Ip; lw=-Ip; 
        vr=gm*vk;
        Cv_=Cv_+gm*gm*vk'*E;
        
        block_size = p1 + p2;
        idx_range = col_idx : col_idx + block_size - 1;
        V(:, idx_range) = vr;
        Br_lyap(idx_range,:) = vr'*Bh;

        if k==1
            Sv(idx_range,idx_range)=sv;
            
        else
            Sv(1:idx_range(end), idx_range) = [Lv(:,1:col_idx-1)'*lv; sv];
        end
        Lv(:,idx_range)=lv;

        Svc=Sv(1:idx_range(end),1:idx_range(end));
        Ik=eye(idx_range(end));

        if isempty(B1) || isempty(C2)
            AhT=-Svc';
        else
            Lv2=Lv(end-p2+1:end,1:idx_range(end));
            Br1=Br_lyap(1:idx_range(end),1:m1);
            AhT=-Svc'-Lv2'*inv(R1)*Br1';
        end
        
        
        if k==1
            t=(AhT+ak*Ik)\(Lv(:,1:idx_range(end))'*Zh);
        else
            Tc=T(1:idx_range(end),1:col_idx-1);
            Xc=X(1:col_idx-1,1:col_idx-1);
            t=(AhT-Tc*Xc*Br_ricc(1:col_idx-1,:)*...
                inv(Rh)*Br_lyap(1:idx_range(end),:)'+...
                ak*Ik)\(Lv(:,1:idx_range(end))'*Zh-...
                Tc*Xc*Lw(:,1:col_idx-1)'*Zh);
        end
        T(1:idx_range(end), idx_range) = t;

        wk=V(:,1:idx_range(end))*t; br_ricc=wk'*Bh;
        Br_ricc(idx_range,:)=br_ricc;
        x_inv = -(Zh + br_ricc*inv(Rh)*br_ricc') / (2*ak);
        x=inv(x_inv); K=K+inv(R1)*((B1'*wk)*x*(wk'*E));
        X(idx_range, idx_range) = x;
        Lw(:,idx_range)=lw;
        
        C_ = C_ + x * wk' * E;
        res=max(sqrt(eig(full(Zh*C_*C_'*Zh*C_*C_'))))/...
            max(sqrt(eig(full(Zh*Ch*Ch'*Zh*Ch*Ch'))));
        Residual=res
        Res=[Res res];
        col_idx = col_idx + block_size;
        k = k + 1;
        
    else
        
        del=real(ak)/imag(ak);
        bt=(gm*gm*sqrt(1+del*del))/(2*del);
        sv=[gm*gm*Ip bt*Ip; -bt*Ip 0*Ip]; lv=[-sqrt(2)*gm*Ip 0*Ip];
        lw=kron([-1 0],Ip);

        vr=sqrt(2)*gm*(real(vk)+del*imag(vk));
        vi=sqrt(2)*gm*sqrt((del^2)+1)*imag(vk);
        Cv_=Cv_+2*gm*gm*(real(vk)+del*imag(vk))'*E;
       
        block_size = 2 * (p1 + p2);
        idx_range = col_idx : col_idx + block_size - 1;
        V(:, idx_range) = [vr, vi];
        Br_lyap(idx_range,:)=[vr'*Bh; vi'*Bh];
        
        if k==1
            Sv(idx_range, idx_range) = sv;
        else
            Sv(1:idx_range(end), idx_range) = [Lv(:,1:col_idx-1)'*lv; sv];
        end
        Lv(:,idx_range)=lv;

        Svc=Sv(1:idx_range(end),1:idx_range(end));
        Ik=eye(idx_range(end));

        if isempty(B1) || isempty(C2)
            AhT=-Svc';
        else
            Lv2=Lv(end-p2+1:end,1:idx_range(end));
            Br1=Br_lyap(1:idx_range(end),1:m1);
            AhT=-Svc'-Lv2'*inv(R1)*Br1';
        end
        
        if k==1
            t=(AhT+ak*Ik)\(Lv(:,1:idx_range(end))'*Zh);
        else
            Tc=T(1:idx_range(end),1:col_idx-1);
            Xc=X(1:col_idx-1,1:col_idx-1);
            t=(AhT-Tc*Xc*Br_ricc(1:col_idx-1,:)*inv(Rh)*...
                Br_lyap(1:idx_range(end),:)'+...
                ak*Ik)\(Lv(:,1:idx_range(end))'*...
                Zh-Tc*Xc*Lw(:,1:col_idx-1)'*Zh);
        end
        tr=real(t); ti=imag(t);
        T(1:idx_range(end), idx_range) = [tr ti];

        wr=V(:,1:idx_range(end))*tr; wi=V(:,1:idx_range(end))*ti;
        br_ricc_r=wr'*Bh; br_ricc_i=wi'*Bh;
        Br_ricc(idx_range,:)=[br_ricc_r; br_ricc_i];

        ar = real(ak); ai = imag(ak);
        den = 4 * ar * (ar^2 + ai^2);
        q11 = br_ricc_r*inv(Rh)*br_ricc_r';
        q22 = br_ricc_i*inv(Rh)*br_ricc_i';
        q12 = br_ricc_r*inv(Rh)*br_ricc_i';
        c1 = 2*ar^2 + ai^2; c2 = ai^2; c3 = ar*ai;
        p11  = -(1/den) * ( c1*(Zh + q11) + c2*q22 + c3*(q12 + q12') );
        p12 =  (1/den) * ( c3*(Zh + q11 - q22) - c1*q12 + c2*q12');
        p22  =  (1/den) * (-c2*(Zh + q11)- c1*q22 + c3*(q12 + q12') );
        x_inv=[p11 p12; p12' p22]; x=inv(x_inv);
        X(idx_range, idx_range) = x; K=K+inv(R1)*((B1'*[wr, wi])*x*([wr, wi]'*E));
        Lw(:,idx_range)=lw;
       
        C_ = C_ + [Ip, 0*Ip]*x* [wr, wi]' * E;
        res=max(sqrt(eig(full(Zh*C_*C_'*Zh*C_*C_'))))/...
            max(sqrt(eig(full(Zh*Ch*Ch'*Zh*Ch*Ch'))));
        Residual=res
        Res=[Res res];
        col_idx = col_idx + block_size;
        k = k + 2;

    end
    
    if k>=length(a)

        if size(W_proj,2) >= rank_max
            r_idx=1;
        end
        actual_cols = col_idx - 1;
        last_cols = actual_cols-r_idx*(p1+p2);
        W_proj=V(:,1:actual_cols)*T(1:actual_cols,last_cols+1:actual_cols);
        [wp,~]=qr(W_proj,0);
        er=wp'*E*wp; ar=wp'*A*wp;
        cr=C_*wp;
        a_new = eig_sort(ar/er,(cr/er)',cr/er);
        a_new=(-abs(real(a_new))+sqrt(-1).*imag(a_new)).';
        if abs(imag(a_new(1))) <= 1e-8
            a_new=real(a_new(1));
        else
            a_new=[a_new(1) conj(a_new(1))];
        end
        a=[a a_new]; r_idx=r_idx+1;

    end


end

%% Trimming
actual_cols = col_idx - 1;
V = V(:, 1:actual_cols);
X = X(1:actual_cols, 1:actual_cols);
T = T(1:actual_cols, 1:actual_cols);
Sv = Sv(1:actual_cols, 1:actual_cols);
Lv = Lv(:, 1:actual_cols);
a_used=a(1:(actual_cols)/(p1+p2));
if ~isempty(C2)
    K=K+inv(R1)*C2;
end


%% Helper Function

function [sig,ev] = eig_sort(A,B,C)

A=full(A); B=full(B); C=full(C);
[ve,se] = eig(A); se = diag(se); weT = inv(ve);
re=length(se); quan = zeros(re,1);

for ke = 1:re  
     if  (norm(C*ve(:,ke)) ~= 0)  &&  (norm(weT(ke,:)*B) ~= 0)
         quan(ke) = norm(C*ve(:,ke))*norm(weT(ke,:)*B)/abs(real(se(ke)));
     else
         quan(ke) = 0;
     end
end
[~,inds] = sort(quan,'descend'); sig = ( se(inds) ).'; ev = ve(:,inds);

end

end
\end{verbatim}

\end{document}